\theoremstyle{plain}
\def\nd{\noindent}
\def\thend{\rule{3mm}{3mm}}
\newtheorem{claim}{Claim}[section]
\newtheorem{theorem}{Theorem}[section]
\newtheorem{proposition}{Proposition}[section]
\newtheorem{lemma}{Lemma}[section]
\newtheorem{corollary}{Corollary}[section]
\newtheorem*{theorem*}{Theorem}
\numberwithin{equation}{section}
\begin{document}
\title{Multiplicity of normalized solutions for a Schr\"{o}dinger equation with critical growth in $\mathbb{R}^{N}$}
\author{ Claudianor O. Alves,\footnote{C.O. Alves was partially supported by CNPq/Brazil   grant 304804/2017-7.} \,\, Chao Ji\footnote{Corresponding author} \footnote{C. Ji was partially supported by Natural Science Foundation of Shanghai(20ZR1413900,18ZR1409100).} \,\, and \,\,   Ol\'{i}mpio H. Miyagaki\footnote{O. H. Miyagaki was supported by FAPESP/Brazil grant 2019/24901-3  and CNPq/Brazil grant 307061/2018-3.}}

\maketitle

\begin{abstract}
In this paper  we study the multiplicity of  normalized solutions  to the following nonlinear Schr\"{o}dinger equation with critical growth
\begin{align*}
 \left\{
\begin{aligned}
&-\Delta u=\lambda u+\mu |u|^{q-2}u+f(u), \quad
\quad
\hbox{in }\mathbb{R}^N,\\
&\int_{\mathbb{R}^{N}}|u|^{2}dx=a^{2},
\end{aligned}
\right.
\end{align*}
where $a,\mu>0$, $\lambda\in \mathbb{R}$ is an unknown parameter that appears as a Lagrange multiplier, $q \in (2,2+\frac{4}{N})$ and $f$ has an exponential critical growth when $N=2$, and  $f(u)=|u|^{2^*-2}u$ when $N \geq 3$ and $2^{*}=\frac{2N}{N-2}$.
\end{abstract}

{\small \textbf{2010 Mathematics Subject Classification:} 35A15, 35J10, 35B09, 35B33.}

{\small \textbf{Keywords:} Normalized solutions, Multiplicity, Nonlinear Schr\"odinger equation, Variational methods, Critical exponents.}

\section{Introduction}

This paper concerns the multiplicity of normalized solutions to  the following nonlinear Schr\"{o}dinger equation with critical growth
\begin{align}\label{11}
 \left\{
\begin{aligned}
	&-\Delta u=\lambda u+\mu |u|^{q-2}u+f(u), \quad
	\quad
	\hbox{in }\mathbb{R}^N,\\
	&\int_{\mathbb{R}^{N}}|u|^{2}dx=a^{2},
\end{aligned}
\right.
\end{align}
where $a,\mu>0$, $\lambda\in \mathbb{R}$ is an unknown parameter that appears as a Lagrange multiplier, $q \in (2,2+\frac{4}{N})$ and $f$ has an exponential critical growth when $N=2$, and  $f(u)=|u|^{2^*-2}u$ when $N \geq 3$ and $2^{*}=\frac{2N}{N-2}$.\\

One motivation driving the search for normalized solutions
of the equation \eqref{11} is the nonlinear Schr\"{o}dinger equation
\begin{equation}\label{Tra}
i\frac{\partial \psi}{\partial t}+\triangle \psi+g(|\psi|^{2})\psi=0
\quad
\hbox{in }\mathbb{R}^N.
\end{equation}
Since the mass $\int_{\mathbb{R}^{N}}|\psi|^{2}dx$ is preserved along trajectories of \eqref{Tra}, it is natural to consider it as prescribed. A standing wave solution is a solution of the form $\psi(t, x)=e^{-i\lambda t}u(x)$, where $\lambda\in \mathbb{R}$ and $u:\mathbb{R}^N\rightarrow \mathbb{R}$ is a time-independent real valued function. This leads to \eqref{11} for $u$ with $g(|u|^{2})u=\mu |u|^{q-2}u+f(u)$.

In recent decades, the question of finding solutions of nonlinear Schr\"odinger equations with prescribed $L^{2}$-norm has  received a special attention. This  seems to be particularly meaningful from the physical point of view, because the $L^{2}$-norm is a preserved quantity of the evolution and the variational characterization of such solutions is often a strong help to analyze their orbital stability, see \cite{BellazziniJeanjeanLuo, CL, Nicola1, Nicola2} and the references therein.

A solution $u$ to the problem \eqref{11} with  $\int_{\mathbb{R}^{N}}|u|^{2}dx=a^{2}$ corresponds to a critical point of the following $C^{1}$ functional
$$
J(u)=\frac{1}{2}\int_{\mathbb{R}^N}|\nabla u|^2 \,dx-\frac{\mu}{q}\int_{\mathbb{R}^N} |u|^q \,dx -\int_{\mathbb{R}^N}F(u)\,dx,\,\, u\in H^{1}(\mathbb{R}^N),
$$
restricted to the sphere in $L^2(\mathbb{R}^N)$ given by
$$
S(a)=\{u \in H^{1}(\mathbb{R}^N)\,:\, | u |_2=a\, \},
$$
 where $F(t)=\int_{0}^{t}f(s)ds$ for $N=2$ and $F(t)=\frac{1}{2^*}|t|^{2^*}$ for $N\geq 3$.

We recall that the number $\bar{q}:=2+\frac{4}{N}$ is called  as the $L^2-$ critical exponent, which comes from   Gagliardo-Nirenberg inequality, (see \cite[Theorem 1.3.7, page 9]{CazenaveLivro} and plays a special role. From the variational point of view, when $f\equiv 0$ and $\mu>0$ in the problem \eqref{11}, then $J$ is bounded from below on $S(a)$ for the purely $L^2$-subcritical problem, i.e. $2<q<2+\frac{4}{N}$. Thus, for every $a, \mu>0$, a solution of  \eqref{11} can be found as a global minimizer of $J_{|S(a)}$, see \cite{Lions, Sh, Stu}. For the purely $L^2$-supercritical problem, i.e. $2+\frac{4}{N}<q<2^{*}$, $J_{|S(a)}$ is unbounded from below (and from above). For this case, Jeanjean \cite{jeanjean1} exploited a mountain pass structure and obtained the existence of one normalized solution. We refer \cite{Bartschmolle, jeanjean1, JeanjeanLu2020, Lions, Sh, T} where more general nonlinearities are considered. In the purely $L^2$-critical case, that is, $q =2+\frac{4}{N}$, the related problems were studied in \cite{Cheng, Miao}. Another interested topic involve the nonlinear Schr\"{o}dinger equations with combined nonlinearities, i.e. $f(u)=|u|^{p-2}u$ and $2<q\leq \bar{q}\leq p\leq 2^{*}$ for $p\neq q$ and $N\geq 3$ in \eqref{11}.  This kind of problems are more difficult, because the interplay between subcritical, critical and supercritical nonlinearities has deep impact on the geometry of the functional and on the existence and properties of ground states, we refer  \cite{Nicola1, Nicola2, Stefanov, TaoVisanZhang} for the related results (see also \cite{LZ} for the fractional case).

Existence of multiple normalized solutions has also been extensively investigated. Bartsch and de Valeriola in \cite{valerio}
 derived infinitely many radial solutions from a
fountain theorem type argument. Ikoma and Tanaka  \cite{Ikoma} provided
a multiplicity result by exploiting an idea related to symmetric
mountain pass theorems. In particular, due to our scope, we would like to mention \cite{JeanjeanLu} due to Jeanjean and Lu, where the authors showed the existence and multiplicity of nonradial solutions for problem of the type
\begin{align}\label{JeanjeanLu}
	\left\{
	\begin{aligned}
		&-\Delta u=\lambda u+g(u), \quad
		\quad
		\hbox{in }\mathbb{R}^N,\\
		&u>0,\,\,\, \int_{\mathbb{R}^{N}}|u|^{2}dx=a^{2},
	\end{aligned}
	\right.
\end{align}
where the nonliearity $g$ satisfies some technical conditions, and a model of nonlinearity that their results can be applied is
$$
g(t)=|t|^{p-2}t-|t|^{q-2}t, \quad t \in \mathbb{R},
$$
where $2<q<2+\frac{4}{N}<p<2^*$. In that paper, the main tools used are  the variational methods combined with genus theory. The results in this paper were extended in \cite{JeanjeanLu2020} including more nonlinearities. For further results
about multiplicity of normalized solutions for nonlinear Schr\"{o}dinger equations and systems, see \cite{BartschSaove},  \cite{CingolaniJeanjean}, \cite{Guo}, \cite{JeanjeanLu}, \cite{JeanjeanLe}, \cite{JeanjeanLe2}   \cite{MEDERSKISCHINO},  \cite{BenedettaTavaresVerzini},  \cite{WangLi} and references therein.

Recently, the authors in \cite{CCM} studied the existence of normalized solutions for a Schr\"{o}dinger equation with critical growth in $\mathbb{R}^{N}$ for $N \geq 2$. For existence result as ground state type solution or else mountain pass type solution involving the critical Sobolev growth for $N \geq 3$, we would also like to cite \cite{JeanjeanJendrejLeVisciglia, wei}. Another  natural problem is to search the multiplicity  of normalized solutions for a Schr\"{o}dinger equation with critical growth in $\mathbb{R}^{N}$ for $N\geq 3$ and $N=2$. Motivated by \cite{JeanjeanLu, CCM}, the goal of this paper is to find multiple normalized solutions for the problem \eqref{11} by using a minimax theorem found in \cite{JeanjeanLu} and truncation argument made  in \cite{GP}.

Our first main result is the following:
\begin{theorem}\label{T1} Assume that $f(t)=|t|^{2^*-2}t$ and $q \in (2,2+\frac{4}{N})$ with $N \geq 3$. Then, given $n \in \mathbb{N}$, there exist $\alpha>0$ independent of $n$ and $\mu_n=\mu(n)$ such that \eqref{11} admits at least $n$ couples $(u_{j}, \lambda_{j})\in H^{1}(\mathbb{R}^N)\times \mathbb{R}$ of weak solutions for $\mu \geq \mu_n$ and $ a \in \left(0,(\frac{\alpha}{\mu})^{\frac{1}{(1-\beta)q}}\right)$ with $\int_{\mathbb{R}^{N}}|u_j|^{2}dx=a^{2}$,  $\lambda_{j}<0$ and $J(u_j)<0$ for $j=1,2,..,n.$
\end{theorem}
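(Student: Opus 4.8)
The plan is to produce the solutions as critical points of $J$ on $S(a)$ via a genus-based minimax, after a truncation à la \cite{GP} that simultaneously restores boundedness from below and compactness. I would first record the two inequalities that govern $J$ on $S(a)$: the Gagliardo--Nirenberg inequality $\int_{\mathbb{R}^N}|u|^q\,dx\le C_{N,q}\,|\nabla u|_2^{\,q\gamma_q}\,|u|_2^{\,q(1-\gamma_q)}$ with $\gamma_q=\frac{N(q-2)}{2q}\in(0,1)$, and the Sobolev inequality $\int_{\mathbb{R}^N}|u|^{2^*}\,dx\le S^{-2^*/2}|\nabla u|_2^{\,2^*}$. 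For $u\in S(a)$ these yield $J(u)\ge h(|\nabla u|_2)$, where
$$
h(t)=\tfrac12 t^2-\tfrac{\mu}{q}\,C_{N,q}\,a^{q(1-\gamma_q)}\,t^{q\gamma_q}-\tfrac{1}{2^*}\,S^{-2^*/2}\,t^{2^*}.
$$
Since $q\gamma_q<2<2^*$, the function $h$ is negative for small $t$; I would then show that the smallness hypothesis $a\in\bigl(0,(\alpha/\mu)^{1/((1-\beta)q)}\bigr)$, which (with $\beta=\gamma_q$) amounts to keeping $\mu a^{q(1-\gamma_q)}$ below a fixed constant $\alpha$, forces $h$ to become strictly positive on an interval $(R_0,R_1)$ with a positive maximum there. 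This positive barrier separates the region $\{|\nabla u|_2\le R_0\}$ from large gradients.

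Next, fixing a smooth cutoff $\tau:[0,\infty)\to[0,1]$ equal to $1$ on $[0,R_0]$ and to $0$ on $[R_1,\infty)$, I would define the truncated functional
$$
J_T(u)=\tfrac12|\nabla u|_2^2-\tfrac{\mu}{q}\int_{\mathbb{R}^N}|u|^q\,dx-\tfrac{1}{2^*}\,\tau(|\nabla u|_2)\int_{\mathbb{R}^N}|u|^{2^*}\,dx .
$$
The barrier makes $J_T$ bounded from below and coercive on $S(a)$, ensures $J_T=J$ and $J_T'=J'$ on $\{u\in S(a):|\nabla u|_2\le R_0\}$, and guarantees that $J_T(u)<0$ forces $|\nabla u|_2<R_0$. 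Since $f$ is odd, $J_T$ is even, so I can run the Krasnoselskii genus minimax of \cite{JeanjeanLu}: set $c_k=\inf_{A\in\Gamma_k}\sup_{u\in A}J_T(u)$, where $\Gamma_k$ is the family of compact symmetric subsets of $S(a)$ of genus at least $k$. To place these levels strictly below $0$ I would use the mass-preserving scaling $u_s(x)=s^{N/2}u(sx)$: on a fixed $k$-dimensional symmetric subspace intersected with $S(a)$, all norms are equivalent, and as $s\to0$ the $L^2$-subcritical term (exponent $q\gamma_q<2$) dominates, so $\sup J_T<0$ on the scaled set and hence $c_k<0$. Choosing $\mu\ge\mu_n$ large enough — with $\alpha$ independent of $n$ — makes the first $n$ levels $c_1\le\cdots\le c_n<0$ well defined and compatible with the truncation.

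The main obstacle is the Palais--Smale analysis of $J_T|_{S(a)}$ at negative levels, since the critical exponent $2^*$ destroys compactness through concentration (and the full-space setting allows loss by translation). Here I would argue, possibly after restricting to a symmetric subspace such as $H^1_{\mathrm{rad}}(\mathbb{R}^N)$ to suppress translations, that any $(PS)_c$ sequence with $c<0$ is bounded by coercivity, does not vanish because of the $L^2$-subcritical term together with the constraint $|u|_2=a$, and stays in $\{|\nabla u|_2\le R_0\}$; the smallness of $\mu a^{q(1-\gamma_q)}$ keeps the critical contribution below the concentration threshold $\tfrac1N S^{N/2}$, so no bubble can detach and one obtains strong convergence in $H^1(\mathbb{R}^N)$. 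This produces $n$ critical points $u_1,\dots,u_n$ of $J_T|_{S(a)}$ with $J_T(u_j)=c_j<0$; since each satisfies $|\nabla u_j|_2\le R_0$, each is a genuine critical point of $J|_{S(a)}$ and hence solves \eqref{11} with a Lagrange multiplier $\lambda_j$, while the genus structure (distinct levels, or higher genus of a shared critical set) yields $n$ distinct couples and $J(u_j)<0$.

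Finally, the sign of $\lambda_j$ follows algebraically. Testing the equation with $u_j$ gives
$$
\lambda_j a^2=|\nabla u_j|_2^2-\mu\int_{\mathbb{R}^N}|u_j|^q\,dx-\int_{\mathbb{R}^N}|u_j|^{2^*}\,dx,
$$
whereas $J(u_j)<0$ reads $\tfrac12|\nabla u_j|_2^2<\tfrac{\mu}{q}\int|u_j|^q\,dx+\tfrac{1}{2^*}\int|u_j|^{2^*}\,dx$. Because $q>2$ and $2^*>2$, we get $|\nabla u_j|_2^2<\tfrac{2}{q}\mu\int|u_j|^q\,dx+\tfrac{2}{2^*}\int|u_j|^{2^*}\,dx<\mu\int|u_j|^q\,dx+\int|u_j|^{2^*}\,dx$, so that $\lambda_j a^2<0$ and therefore $\lambda_j<0$, which completes the argument.
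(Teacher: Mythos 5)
Your proposal is correct and its skeleton coincides with the paper's: truncate $J$ in the Garcia Azorero--Peral style using the Gagliardo--Nirenberg/Sobolev lower bound $h$, run the Jeanjean--Lu genus minimax on $S(a)\cap H^{1}_{rad}(\mathbb{R}^N)$, verify a local Palais--Smale condition at negative levels via concentration-compactness, and transfer critical points of $J_T$ back to $J$ through the property that $J_T(u)<0$ forces $|\nabla u|_2<R_0$. Two of your sub-arguments, however, genuinely differ from the paper's, and both are sound. First, to force $c_k<0$ you use the mass-preserving dilation $u_s(x)=s^{N/2}u(sx)$ with $s\to 0$, under which the subcritical term (exponent $q\gamma_q=\tfrac{N(q-2)}{2}<2$) dominates uniformly on a finite-dimensional sphere; this works for \emph{every} $\mu>0$, so your route would in fact prove the theorem without the hypothesis $\mu\geq\mu_n$. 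The paper instead fixes $|\nabla v|_2=\rho<R_0$ on the sphere $\Upsilon_n$ and takes $\mu\geq \mu_n$ large so that $\sup_{\Upsilon_n}J_T\leq-\epsilon$ (Lemma \ref{Lemma 4.33}); that is the only place where large $\mu$ enters. Second, you obtain $\lambda_j<0$ by testing the equation against $u_j$ and combining with $J(u_j)<0$ and $q,2^*>2$, whereas the paper invokes Pohozaev's identity on the limit equation; your argument is more elementary and equally valid. One caution about where the multiplier sign is actually needed: it is not a final remark but the engine of the Palais--Smale proof. Since $H^{1}_{rad}(\mathbb{R}^N)$ does not embed compactly into $L^{2}(\mathbb{R}^N)$, even after bubbles are excluded one needs $\lambda_a<0$ (the limit of the Lagrange multipliers) to convert $|\nabla u_n|_2^2-\lambda_a|u_n|_2^2\to|\nabla u|_2^2-\lambda_a|u|_2^2$ into strong $H^1$ convergence and to recover the constraint $|u|_2=a$; your sketch compresses this into ``one obtains strong convergence.'' The gap is repaired by your own computation run along the sequence: $J(u_n)\to c<0$ together with $\liminf_n\int_{\mathbb{R}^N}|u_n|^q\,dx>0$ (non-vanishing of the weak limit, from the compact embedding into $L^q$) gives $\limsup_n\lambda_n<0$, after which the paper's closing argument applies verbatim.
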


The above theorem extends some of the previous result found in the literature for the scalar equation, because it is the first result that establishes the existence of many solutions when the combined power nonlinearities are of mixed type and are the form
$g(t)=\mu|u|^{q-2}u+|u|^{2^*-2}u,$ with $q\in (2,2+\frac{4}{N})$. However, it is very important to point out that a similar result can be proved by supposing that $g(t)=\mu|u|^{q-2}u+|u|^{p-2}u$ with $ p \in (2+\frac{4}{N},2^*)$, we state the following corollary.

\begin{corollary}\label{CT1}
	Assume that $f(t)=|t|^{p-2}t$ with  $ p \in (2+\frac{4}{N},2^*)$ and $q \in (2,2+\frac{4}{N})$ with $N \geq 3$. Then, given $n \in \mathbb{N}$, there exist $\alpha>0$ independent of $n$, $\mu_n=\mu(n)$ such that \eqref{11} admits at least $n$ couples $(u_{j}, \lambda_{j})\in H^{1}(\mathbb{R}^N)\times \mathbb{R}$ of weak solutions for $\mu \geq \mu_n$ and $ a \in \left(0,(\frac{\alpha}{\mu})^{\frac{1}{(1-\beta)q}}\right)$ with $\int_{\mathbb{R}^{N}}|u_j|^{2}dx=a^{2}$,  $\lambda_{j}<0$ and $J(u_j)<0$ for $j=1,2,..,n.$
\end{corollary}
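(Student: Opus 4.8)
The plan is to reproduce the entire proof of Theorem~\ref{T1} with the Sobolev critical term $|u|^{2^*-2}u$ replaced throughout by $|u|^{p-2}u$, observing that the exponent $p$ enters only through the Gagliardo--Nirenberg estimate that fixes the geometry of $J_{|S(a)}$ and through the compactness analysis of the constrained Palais--Smale sequences. First I would record, for $p\in(2+\frac{4}{N},2^*)$, the Gagliardo--Nirenberg inequality
\begin{equation*}
\int_{\mathbb{R}^N}|u|^p\,dx\le C_{N,p}\,\Big(\int_{\mathbb{R}^N}|\nabla u|^2\,dx\Big)^{\frac{p\gamma_p}{2}}\Big(\int_{\mathbb{R}^N}|u|^2\,dx\Big)^{\frac{p(1-\gamma_p)}{2}},\qquad \gamma_p=\frac{N(p-2)}{2p},
\end{equation*}
and likewise for the mass-subcritical term with exponent $q$ and $\gamma_q=\frac{N(q-2)}{2q}$. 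Since $p>2+\frac{4}{N}$ we have $p\gamma_p>2$, exactly as $2^*\gamma_{2^*}=2^*>2$, so the $p$-term plays the same destabilising role on $S(a)$ as the critical term did: $J_{|S(a)}$ is unbounded from below, but for $u\in S(a)$ the lower bound
\begin{equation*}
J(u)\ge \tfrac12 t^2-\tfrac{\mu}{q}C_{N,q}a^{q(1-\gamma_q)}t^{q\gamma_q}-\tfrac1p C_{N,p}a^{p(1-\gamma_p)}t^{p\gamma_p},\qquad t=|\nabla u|_2,
\end{equation*}
has, for the same range of $a$ and $\mu$ identified in Theorem~\ref{T1}, a strictly negative local minimum followed by a local maximum.

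Next I would carry out the truncation of \cite{GP} verbatim: cutting the nonlinearity, equivalently modifying $J$ for large $|\nabla u|_2$, produces a $C^1$ functional $\tilde J$ on $S(a)$ that is bounded from below, coincides with $J$ on the sublevel set where the critical points of interest live, is invariant under the relevant odd symmetry, and to which the genus-based minimax scheme of \cite{JeanjeanLu} applies. The real parameter $\beta$ fixing the admissible range $a\in(0,(\alpha/\mu)^{1/((1-\beta)q)})$ is the one produced by this scaling for the $q$-term and depends only on $N$ and $q$; since the subcritical-mass part of the problem is untouched, both $\beta$ and the $n$-independent constant $\alpha$ are inherited unchanged, and the scheme yields $n$ distinct minimax values $c_1\le\cdots\le c_n<0$.

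The decisive difference, and the reason the corollary is in fact easier, appears in the compactness step. In Theorem~\ref{T1} the critical exponent forces a delicate comparison of the minimax levels with the Sobolev threshold $\frac1N S^{N/2}$ in order to rule out the loss of compactness caused by concentration. Here, because $p<2^*$ is Sobolev-subcritical, the embedding $H^1_{\mathrm{rad}}(\mathbb{R}^N)\hookrightarrow L^p(\mathbb{R}^N)$ is compact, so any bounded Palais--Smale sequence for $\tilde J_{|S(a)}$ taken in the radial subspace converges strongly along a subsequence with no threshold restriction; the associated Lagrange multiplier is recovered from the constrained equation and is negative by the same Pohozaev/scaling computation as in Theorem~\ref{T1}. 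Each $c_j$ is thus a critical value of $J_{|S(a)}$, giving $n$ couples $(u_j,\lambda_j)$ with $\lambda_j<0$ and $J(u_j)=c_j<0$. I therefore do not expect any genuine obstacle beyond bookkeeping: the only point requiring care is to check that the truncation threshold and the constants $\alpha,\beta$ can be chosen uniformly in $p$ over the stated range, which follows because all the estimates that set the geometry are the mass-subcritical ones coming from the $q$-term, together with the now-compact $L^p$ control of the supercritical term.
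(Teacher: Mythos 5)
Your proposal is correct and is essentially the paper's own route: the paper offers no separate proof of Corollary \ref{CT1}, obtaining it by exactly the substitution you describe of $|u|^{p-2}u$ for $|u|^{2^*-2}u$ in the proof of Theorem \ref{T1}, with the same Gagliardo--Nirenberg geometry, the truncation of \cite{GP}, and the genus minimax scheme of \cite{JeanjeanLu}. Your central observation --- that since $p<2^*$ the compact embedding $H^{1}_{rad}(\mathbb{R}^N)\hookrightarrow L^{p}(\mathbb{R}^N)$ makes the Lions concentration-compactness analysis and the threshold restriction $R_0<S^{\frac{N}{2}}$ in \eqref{R0small} unnecessary, while the negative Lagrange multiplier still upgrades weak to strong $H^{1}$ convergence --- is precisely the simplification that lets the paper state this as a corollary rather than a separate theorem.
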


Motivated by the research made in the critical Sobolev case, in this paper we also study the exponential critical growth  for $N=2$, which is a novelty for this type of problems. To the best of our knowledge, there is no any results  involving   normalizing problems  with the exponential critical growth, except in \cite{CCM}, where the authors considered the existence of normalized solutions. Now we recall that in $\mathbb{R}^2$, the natural growth restriction on the nonlinearity $f$  is given by the inequality
of Trudinger and Moser \cite{M,T}. More precisely, we say that a
function $f$ has an exponential critical growth if there exists $\alpha_0 >0$ such that
$$ \lim_{|s| \to \infty} \frac{|f(s)|}{e^{\alpha s^{2}}}=0
\,\,\, \forall\, \alpha > \alpha_{0}\quad \mbox{and} \quad
\lim_{|s| \to \infty} \frac{|f(s)|}{e^{\alpha s^{2}}}=+ \infty
\,\,\, \forall\, \alpha < \alpha_{0}.
$$
We would like to mention that the problems involving exponential critical growth have received a special attention at last years, see for example, \cite{A, AdoOM, ASS1,  Cao,DMR,DdOR, OS,doORuf} for semilinear elliptic equations, and \cite{1,AlvesGio,2,5} for quasilinear equations.

In our problem, we assume that $f$ is an odd continuous function that satisfies the following conditions:

\begin{itemize}
	\item[\rm ($f_1$)]$\displaystyle \lim_{t \to 0}\frac{|f(t)|}{|t|^{\tau}}=0$ as $t\rightarrow 0$,\, \mbox{for some}\, $\tau>3$;

	\item[\rm ($f_2$)]$$
	\lim_{|t|\rightarrow +\infty} \frac{|f(t)|}{e^{\alpha t^{2}}}
	=
	\begin{cases}
		0,& \hbox{for } \alpha> 4\pi,\\
		+\infty,& \hbox{for }  0<\alpha<4\pi;
	\end{cases}
	$$
	
	\item[\rm ($f_3$)] there exist constants $p>4$ and $C>0$ such that
$$
sgn(t) f(t) \geq \mu |t|^{p-1} , \forall t \in \mathbb{R}.
$$
where $sgn:\mathbb{R}\setminus \{0\} \to \mathbb{R}$ is given by
	$$
	sgn(t)=
	\left\{
	\begin{array}{l}
		1, \quad\, \,\,\,\mbox{if} \quad t>0,
		\mbox{}\\
		-1, \quad \mbox{if} \quad t<0.	
	\end{array}
	\right.
$$
\end{itemize}

Our main result is as follows:
\begin{theorem}\label{T2}
	Assume that $f$ satisfies $(f_1)-(f_3)$ and $a \in (0,1)$ when $N=2$. Then, given $n \in \mathbb{N}$, there exist $\alpha>0$ independent of $n$, $\mu_n=\mu(n)$ such that \eqref{11} admits at least $n$ couples $(u_{j}, \lambda_{j})\in H^{1}(\mathbb{R}^N)\times \mathbb{R}$ of weak solutions for $\mu \geq \mu_n$ and $ a \in \left(0,(\frac{\alpha}{\mu})^{\frac{1}{(1-\beta)q}}\right)$ with $\int_{\mathbb{R}^{N}}|u_j|^{2}dx=a^{2}$,  $\lambda_{j}<0$ and $J(u_j)<0$ for $j=1,2,..,n.$
\end{theorem}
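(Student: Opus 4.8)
The plan is to adapt the scheme behind Theorem~\ref{T1} to the two-dimensional exponential setting, replacing the Sobolev estimates by Trudinger--Moser ones and combining the truncation idea of \cite{GP} with the symmetric minimax theorem of \cite{JeanjeanLu}. Since $f$ is odd, $F$ is even and $J$ is an even $C^1$ functional on $S(a)$, so the antipodal $\mathbb{Z}_2$-action $u\mapsto -u$ is available and the Krasnoselskii genus $\gamma$ can be used to produce several critical points at once.

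First I would quantify the exponential term on a region where the Trudinger--Moser inequality applies. From $(f_1)$--$(f_2)$, for every $\var>0$, $\alpha>4\pi$ and $r>1$ one has $F(t)\le \var|t|^{\tau+1}+C_\var|t|^{r}\big(e^{\alpha t^2}-1\big)$, and for $u\in S(a)$ with $\alpha\|\nabla u\|_2^2<4\pi$ (feasible because $a<1$) Hölder's inequality together with Trudinger--Moser bounds $\int_{\mathbb{R}^2}F(u)\,dx$ by a quantity of higher order in $\|\nabla u\|_2$ than $\|u\|_q^q$, since $\tau+1>4>q$. Writing $s=\|\nabla u\|_2$ and using Gagliardo--Nirenberg $\|u\|_q^q\le C\,s^{\beta q}a^{(1-\beta)q}$ with $\beta=\frac{q-2}{q}$ and $\beta q<2$, the restricted functional is bounded below by $g(s)=\tfrac12 s^2-\tfrac{C\mu}{q}a^{(1-\beta)q}s^{\beta q}-C's^{\tau+1}$. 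A direct computation gives $g(s_*)<0$ at its interior minimum (the sign is governed by $\tfrac{\beta}{2}-\tfrac1q=\tfrac{q-4}{2q}<0$ for $q\in(2,4)$), while $g(s)\ge 0$ once $s$ exceeds the second zero $s_{**}$; the hypothesis $\mu a^{(1-\beta)q}<\alpha$, i.e. $a<(\alpha/\mu)^{1/((1-\beta)q)}$, is exactly what keeps $s_{**}$ small enough that $\alpha s_{**}^2<4\pi$. I then fix a smooth cutoff $\eta$ equal to $1$ on $[0,\rho_0]$ and to $0$ on $[\rho_1,\infty)$ with $s_{**}<\rho_0<\rho_1$ and $\alpha\rho_1^2<4\pi$, and set $\tilde J(u)=\tfrac12\|\nabla u\|_2^2-\eta(\|\nabla u\|_2)\big[\tfrac{\mu}{q}\|u\|_q^q+\int_{\mathbb{R}^2}F(u)\big]$. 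Then $\tilde J$ is even, $C^1$ and bounded below on $S(a)$, it coincides with $J$ on $\{\|\nabla u\|_2\le\rho_0\}$, and since $\eta\le1$ one has $\tilde J(u)\ge g(s)\ge0$ for $\|\nabla u\|_2\ge\rho_0$.

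For the multiplicity, given $n$ I would pick functions $e_1,\dots,e_n$ with pairwise disjoint supports and $|e_i|_2=1$, and set $A_n=\{a\sum_{i=1}^n\sigma_i e_i:\ \sigma\in S^{n-1}\}\subset S(a)$, an odd image of $S^{n-1}$, hence $\gamma(A_n)=n$. On the finite-dimensional $A_n$ the gradient is bounded by $a^2K$ and, using $q>2$ and $|\sigma|=1$, $\|u\|_q^q\ge a^q n^{1-q/2}\kappa>0$; after a norm-preserving dilation placing $A_n$ inside $\{\|\nabla u\|_2<\rho_0\}$ (so $\tilde J=J$ there) and using $F\ge0$ from $(f_3)$, one obtains $\sup_{A_n}\tilde J\le \tfrac12 a^2K-\tfrac{\mu}{q}a^q n^{1-q/2}\kappa$, negative as soon as $\mu\ge\mu_n$; this fixes the threshold $\mu_n=\mu(n)$, whereas the constant $\alpha$ above is the $n$-independent Trudinger--Moser threshold. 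Setting $c_k=\inf_{\gamma(A)\ge k}\sup_{u\in A}\tilde J(u)$ over symmetric compact subsets of $S(a)$, the lower bound gives $c_k>-\infty$ and the sets $A_n$ give $c_1\le\cdots\le c_n<0$. The symmetric minimax/deformation theorem of \cite{JeanjeanLu}, applied on the manifold $S(a)$ to the even functional $\tilde J$, then yields $n$ critical points $u_1,\dots,u_n$ with $\tilde J(u_j)=c_j<0$ (with the usual multiplicity count if some $c_j$ coincide). Because $\tilde J\ge0$ for $\|\nabla u\|_2\ge\rho_0$, each $u_j$ satisfies $\|\nabla u_j\|_2<\rho_0$, where $\tilde J=J$; hence the $u_j$ are genuine critical points of $J|_{S(a)}$ with $J(u_j)<0$, i.e. weak solutions of \eqref{11} with Lagrange multipliers $\lambda_j$.

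The hard part is the Palais--Smale condition for $\tilde J|_{S(a)}$ at the negative levels $c_k$: in the exponential-critical regime compactness may be lost through concentration. The truncation confines PS sequences at negative level to $\{\|\nabla u\|_2\le\rho_0\}$, where $\alpha\|\nabla u\|_2^2<4\pi$; together with $a<1$ this keeps the Moser integrals equi-integrable, so the weak $H^1$-limit is attained strongly once one also controls the Lagrange multipliers $\lambda(u_k)$ along the sequence (their boundedness follows by testing the equation with $u_k$ and using the uniform bound $\|\nabla u_k\|_2<\rho_0$). Finally $\lambda_j<0$ comes from the Pohozaev identity: for $N=2$ its dimensional prefactor vanishes, leaving $\tfrac{\lambda_j}{2}a^2+\tfrac{\mu}{q}\|u_j\|_q^q+\int_{\mathbb{R}^2}F(u_j)=0$, so that $\lambda_j=-\tfrac{2}{a^2}\big(\tfrac{\mu}{q}\|u_j\|_q^q+\int_{\mathbb{R}^2}F(u_j)\big)<0$ since $\mu>0$ and $F\ge0$.
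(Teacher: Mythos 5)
Your proposal follows the same overall route as the paper: a Garcia Azorero--Peral type truncation \cite{GP} of the critical term, genus-$n$ finite-dimensional spheres on which the truncated functional is negative for $\mu\ge\mu_n$, the Jeanjean--Lu symmetric minimax theorem \cite{JeanjeanLu}, and the $N=2$ Pohozaev identity to get $\lambda_j<0$; those parts are sound. The genuine gap is in the step you yourself call ``the hard part''. You run the whole argument on $S(a)\subset H^{1}(\mathbb{R}^{2})$, but on the full space the Palais--Smale condition at negative levels that you assert is simply false: if $u$ is any critical point of $J|_{S(a)}$ with $J(u)=c<0$ --- and producing such points is the very goal --- then the translates $u_k=u(\cdot-x_k)$ with $|x_k|\to\infty$ form a $(PS)_{c}$ sequence which converges weakly to $0$ and admits no convergent subsequence. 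Equi-integrability of the Moser integrals and boundedness of the Lagrange multipliers, which is all your sketch invokes, cannot exclude this loss of compactness by translation. The paper works in $H^{1}_{rad}(\mathbb{R}^{2})$ precisely for this reason: there the embedding into $L^{q}$ is compact, the lemmas imported from \cite{CCM} (Lemmas \ref{alphat11}--\ref{Convergencia em limitados1}) give $F(u_n)\to F(u)$ and $f(u_n)u_n\to f(u)u$ in $L^{1}(\mathbb{R}^{2})$ for radial sequences with $\limsup_n|\nabla u_n|_2^{2}<1-a^{2}$, and at the end Palais' principle of symmetric criticality \cite{palais} converts radial critical points into solutions in $H^{1}(\mathbb{R}^{2})$. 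Your construction survives this repair (your disjointly supported bumps can be taken as radial functions supported in disjoint annuli), but without it the deformation machinery you invoke does not apply. Note also two points your compactness sketch omits, both present in the paper's Lemma \ref{Lemma 4.2N2}: the sign $\lambda_a<0$ of the limiting multiplier is needed \emph{inside} the argument (it is what turns $|\nabla u_n|_2^{2}-\lambda_a|u_n|_2^{2}\to|\nabla u|_2^{2}-\lambda_a|u|_2^{2}$ into strong $H^{1}$ convergence and $|u|_2=a$), and proving that the weak limit is nonzero already requires the strong $L^{q}$ convergence that only the radial setting provides.

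A secondary inaccuracy: the condition under which you apply Trudinger--Moser, $\alpha\|\nabla u\|_2^{2}<4\pi$, is not the right one. To bound $\int_{\mathbb{R}^2}\left(e^{\alpha u^{2}}-1\right)^{t'}dx$ via Cao's inequality (Lemma \ref{Cao}) one writes $\alpha t' u^{2}=\alpha t'\left(|\nabla u|_2^{2}+a^{2}\right)v^{2}$ with $v=u/(|\nabla u|_2^{2}+a^{2})^{1/2}$, so what is needed is $\alpha t'\left(|\nabla u|_2^{2}+a^{2}\right)<4\pi$, i.e.\ essentially $|\nabla u|_2^{2}<1-a^{2}$; this is exactly why the paper imposes $R_0<\sqrt{1-a^{2}}$ in \eqref{R0} and where the hypothesis $a\in(0,1)$ enters. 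Your truncation radius must therefore be chosen below $\sqrt{1-a^{2}}$, not merely so that $\alpha\rho_1^{2}<4\pi$.
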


The above theorem complements the study made by the authors in \cite{CCM}, because in that paper it was only considered that the existence of normalized solutions and did not involve the multiplicity of  normalized solutions.

In the proof of Theorem \ref{T1} and Theorem \ref{T2}, we shall use an abstract theorem involving genus theory proved by Jeanjean and Lu \cite[Theorem 2.1, Section 2]{JeanjeanLu}, for the convenience of readers, we shall provide more details in Section 2. On the other hand, in the proofs of  these theorems we shall work on the space $H^{1}_{rad}(\mathbb{R}^N)$, because it has the very nice compact embeddings. Moreover, by Palais'  principle of symmetric criticality, see \cite{palais}, we know that the critical points of $J$ in $H^{1}_{rad}(\mathbb{R}^N)$ are in fact critical points in whole $H^{1}(\mathbb{R}^N)$.

\vspace{0.5 cm}

\noindent \textbf{Notation:} From now on in this paper, otherwise mentioned, we use the following notations:
\begin{itemize}
	\item $B_r(u)$ is an open ball centered at $u$ with radius $r>0$, $B_r=B_r(0)$.

	\item   $C,C_1,C_2,...$ denote any positive constant, whose value is not relevant.
	
	\item  $|\,\,\,|_p$ denotes the usual norm of the Lebesgue space $L^{p}(\mathbb{R}^N)$, for $p \in [1,+\infty]$,
    $\Vert\,\,\,\Vert$ denotes the usual norm of the Sobolev space $H^{1}(\mathbb{R}^N)$.

	\item $o_{n}(1)$ denotes a real sequence with $o_{n}(1)\to 0$ as $n \to +\infty$.

\end{itemize}

\section{ A minimax theorem}

In this section, we present a minimax theorem for a class of constrained even functionals that is proved in Jeanjean and Lu \cite{JeanjeanLu}.

In order to formulate the minimax theorem, some notations are needed. Let $\mathcal{E}$ be a real Banach space with
norm $\Vert \cdot\Vert_{\mathcal{E}}$  and $\mathcal{H}$ be a real Hilbert space with inner product $(\cdot, \cdot)_{\mathcal{H}}$. In the sequel, let us identify $\mathcal{H}$ with its dual space
and assume that $\mathcal{E}$ is embedded continuously in $\mathcal{H}$. For any $m>0$, define the manifold
$$
\mathcal{M}:=\{u\in \mathcal{E}\mid (u, u)_{\mathcal{H}}=m \},
$$
which is endowed with the topology inherited from $\mathcal{E}$.

Clearly, the tangent space of $\mathcal{M}$ at a point
$u\in \mathcal{M}$ is defined by
$$
T_{u}\mathcal{M}:=\{v\in \mathcal{E}\mid (u, v)_{\mathcal{H}}=0 \}.
$$
Let $I\in C^{1}(\mathcal{E}, \mathbb{R})$, then $I_{|\mathcal{M} }$ is a functional of class $C^{1}$ on $\mathcal{M}$. The norm of the derivative of  $I_{|\mathcal{M} }$ at
any point $u\in \mathcal{M}$ is defined by
$$
\Vert I'_{|\mathcal{M} }\Vert:=\underset{\Vert v\Vert_{\mathcal{E}}\leq 1, v\in T_{u}\mathcal{M}}{\sup}\vert\langle I'(u), v\rangle \vert.
$$
A point $u\in \mathcal{M}$ is said to be a critical point of $I_{|\mathcal{M} }$ if $I'_{|\mathcal{M} }(u)=0$(or, equivalently, $\Vert I'_{|\mathcal{M} }(u)\Vert=0$). A number $c\in \mathbb{R}$ is called a critical value of $I_{|\mathcal{M} }$ if $I_{|\mathcal{M} }$ has a critical point $u\in \mathcal{M}$ such that $c=I(u)$. We say that $I_{|\mathcal{M} }$ satisfies the Palais-Smale condition at a level $c\in \mathbb{R}$, $(PS)_{c}$ for short, if any sequence
$\{u_{n}\}\subset \mathcal{M}$ with $I(u_{n})\rightarrow c$ and $\Vert I'_{|\mathcal{M} }(u_{n})\Vert\rightarrow 0$ contains a convergent subsequence.

Noting that $\mathcal{M}$ is symmetric with respect to $0\in \mathcal{E}$ and $0\not\in \mathcal{M}$, we introduce the notation of the
genus. Let $\Sigma(\mathcal{M})$ be the family of closed symmetric subsets of $\mathcal{M}$. For any nonempty set $A\in \Sigma(\mathcal{M})$,
the genus $\mathcal{G}(A)$ of $A$ is defined as the least integer $k\geq 1$ for which there exists an odd continuous
mapping $\varphi: A\rightarrow \mathbb{R}^k\backslash\{0\}$. We set $\mathcal{G}(A)=\infty$ if such an integer does not exist, and set $\mathcal{G}(A)=0$ if $A=\emptyset$. For each $k\in N$, let $\Gamma_{k}:=\{A\in \Sigma(\mathcal{M})\mid \mathcal{G}(A)\geq k \}$.

Now, we are ready to state the minimax theorem which will be used later on, which is a particular case of \cite[Theorem 2.1, Section 2]{JeanjeanLu}, and so, its proof will be omitted.
\begin{theorem}(Minimax theorem)\label{minimaxJL}
	Let $I: \mathcal{E}\rightarrow \mathbb{R}$ be an even functional of class $C^{1}$. Assume that
	$I_{|\mathcal{M} }$ is bounded from below and satisfies the $(PS)_{c}$ condition for all $c<0$, and that $\Gamma_{k}\neq \emptyset$ for each
	$k=1,2,...n$. Then the minimax values $-\infty<c_{1}\leq c_{2}\leq \cdots \leq c_{n}$ can be defined as follows:
	$$
	c_{k}:=\inf_{A\in \Gamma_{k}}\sup_{u\in A}I(u),\quad k=1,2,..,n,
	$$
	and the following statements hold.\\
	\noindent (i) $c_{k}$ is a critical value of $I_{|\mathcal{M} }$ provided $c_{k}<0$.\\
	\noindent (ii) Denote by $K^{c}$ the set of critical points of $I_{|\mathcal{M} }$ at a level $c\in \mathbb{R}$. If
	$$
	c_{k}= c_{k+1}= \cdots = c_{k+l-1}=:c<0\quad\text{for some}\,\,k, l\geq 1,
	$$
	then $\mathcal{G}(K^{c}) \geq l$. Hence, $I_{|\mathcal{M} }$ has at least $n$ critical points. \\
\end{theorem}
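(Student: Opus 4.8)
The plan is to reproduce the classical symmetric minimax (Lusternik--Schnirelmann) scheme, adapted to the constraint manifold $\mathcal{M}$, so I will only sketch the architecture and indicate where the real work concentrates. First I would check that the values $c_k$ are well defined and satisfy $-\infty<c_1\le\cdots\le c_n$. Finiteness from below is immediate: since $I_{|\mathcal{M}}$ is bounded from below, the quantity $\sup_{u\in A}I(u)$ is bounded below uniformly in $A$, so each $c_k>-\infty$, while the hypothesis $\Gamma_k\neq\emptyset$ guarantees $c_k<+\infty$. Monotonicity follows from the nesting $\Gamma_{k+1}\subset\Gamma_k$, valid because $\mathcal{G}(A)\ge k+1$ implies $\mathcal{G}(A)\ge k$; taking the infimum over a smaller family can only increase it, whence $c_k\le c_{k+1}$.

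The decisive ingredient is an \emph{equivariant deformation lemma} on $\mathcal{M}$. Since $I$ is even and $\mathcal{M}$ is symmetric, the restricted functional $I_{|\mathcal{M}}$ is even and the constrained gradient (the projection of $I'(u)$ onto $T_u\mathcal{M}$) is odd in $u$. I would build an odd, locally Lipschitz pseudo-gradient vector field tangent to $\mathcal{M}$ on the set of regular points and integrate it into a flow $\eta(t,\cdot):\mathcal{M}\to\mathcal{M}$ that is odd in the space variable and nonincreasing along $I$. The $(PS)_c$ condition (valid for every $c<0$) then yields the usual quantitative statement: given a level $c<0$ and any open symmetric neighborhood $N$ of the critical set $K^c$, there exist $\varepsilon>0$ and an odd continuous map $\eta$ with $\eta\big(\{I\le c+\varepsilon\}\setminus N\big)\subset\{I\le c-\varepsilon\}$, equal to the identity outside a small neighborhood of the level $\{I=c\}$. \textbf{This construction is the main obstacle}, because one must produce the pseudo-gradient flow directly on the manifold while preserving the $\mathbb{Z}_2$-symmetry and keeping the trajectories confined to $\mathcal{M}$; the compactness furnished by $(PS)_c$ is precisely what prevents the deformation from stalling near $K^c$.

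With the deformation in hand, part (i) is a one-line contradiction. Assume $c:=c_k<0$ but $K^c=\emptyset$; then I may take $N=\emptyset$ and obtain $\eta$ with $\eta(\{I\le c+\varepsilon\})\subset\{I\le c-\varepsilon\}$. Choosing $A\in\Gamma_k$ with $\sup_A I<c+\varepsilon$, the image $\eta(A)$ is closed and symmetric (as $\eta$ is odd), and $\mathcal{G}(\eta(A))\ge\mathcal{G}(A)\ge k$ by monotonicity of the genus under odd continuous maps, so $\eta(A)\in\Gamma_k$; but $\sup_{\eta(A)}I\le c-\varepsilon<c_k$, contradicting the definition of $c_k$. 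Hence $K^c\neq\emptyset$ and $c_k$ is a critical value.

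For part (ii), suppose $c_k=\cdots=c_{k+l-1}=:c<0$ and, for contradiction, $\mathcal{G}(K^c)\le l-1$. By $(PS)_c$ the set $K^c$ is compact, and it is symmetric with $0\notin K^c$ (since $0\notin\mathcal{M}$); the continuity property of the genus then gives an open symmetric neighborhood $N$ of $K^c$ with $\mathcal{G}(\overline{N})=\mathcal{G}(K^c)\le l-1$. Applying the deformation lemma to this $N$ produces $\eta$ pushing $\{I\le c+\varepsilon\}\setminus N$ into $\{I\le c-\varepsilon\}$. Pick $A\in\Gamma_{k+l-1}$ with $\sup_A I<c+\varepsilon$ and set $B:=A\setminus N$, which is closed and symmetric. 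By subadditivity of the genus, $\mathcal{G}(B)\ge\mathcal{G}(A)-\mathcal{G}(\overline{N})\ge(k+l-1)-(l-1)=k$, so $B\in\Gamma_k$; moreover $\eta(B)$ is closed symmetric with $\mathcal{G}(\eta(B))\ge\mathcal{G}(B)\ge k$ and $\sup_{\eta(B)}I\le c-\varepsilon<c$, again contradicting $c_k=c$. Therefore $\mathcal{G}(K^c)\ge l$. To count solutions, observe that if the $n$ values $c_1,\dots,c_n$ (all negative in the regime of interest) are pairwise distinct they are $n$ distinct critical values by (i), hence yield $n$ distinct critical points; a coincidence of multiplicity $l$ forces $\mathcal{G}(K^c)\ge l\ge 2$, and a symmetric compact set of genus at least two is infinite, thus contributing infinitely many critical points. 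In every case $I_{|\mathcal{M}}$ possesses at least $n$ critical points, which completes the proof.
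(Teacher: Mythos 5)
First, a point of comparison: the paper does not prove this theorem at all; it states it as a particular case of \cite[Theorem 2.1]{JeanjeanLu} and explicitly omits the proof, so your attempt can only be measured against that cited source, whose argument is indeed the classical symmetric minimax scheme you outline. Your superstructure and genus bookkeeping are correct: well-definedness and monotonicity of the $c_{k}$ from the nesting $\Gamma_{k+1}\subset\Gamma_{k}$; part (i) by deforming a set $A\in\Gamma_{k}$ with $\sup_{A}I<c+\varepsilon$ below the level $c-\varepsilon$; part (ii) by excising a symmetric neighborhood $N$ of $K^{c}$ with $\mathcal{G}(\overline{N})=\mathcal{G}(K^{c})$ (continuity of the genus, legitimate because $(PS)_{c}$ makes $K^{c}$ compact), the subadditivity estimate $\mathcal{G}(A\setminus N)\geq \mathcal{G}(A)-\mathcal{G}(\overline{N})$, and the fact that a compact symmetric set of genus at least $2$ is infinite. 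Two small repairs: $\eta(A)$ need not be closed, so you should pass to $\overline{\eta(A)}$ (harmless, since $I$ is continuous and the level-set bound survives closure); and the final count of $n$ critical points requires $c_{n}<0$, which you acknowledge only parenthetically --- it is also implicit, rather than hypothesized, in the theorem as stated.

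The genuine gap is the one you flag yourself: the equivariant quantitative deformation lemma on $\mathcal{M}$ is asserted, not proved, and in this setting it is not a routine citation --- it is essentially the entire technical content of the theorem. The difficulty is that $\mathcal{M}=\{u\in\mathcal{E}\mid (u,u)_{\mathcal{H}}=m\}$ is a sphere with respect to the inner product of $\mathcal{H}$, while all the analysis (the $C^{1}$ structure, the norm $\Vert I'_{|\mathcal{M}}(u)\Vert=\sup\{\vert\langle I'(u),v\rangle\vert : v\in T_{u}\mathcal{M},\ \Vert v\Vert_{\mathcal{E}}\leq 1\}$, and the $(PS)$ condition) lives in the Banach space $\mathcal{E}$, where no orthogonal projection onto $T_{u}\mathcal{M}$ exists. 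One must build an odd, locally Lipschitz pseudo-gradient field that is simultaneously tangent in the $\mathcal{H}$-sense and controlled in the $\mathcal{E}$-norm, and then correct the resulting flow so that trajectories actually remain on $\mathcal{M}$ rather than merely starting tangentially to it, all while extracting the quantitative decrease of $I$ near the level $c$ from $(PS)_{c}$ at negative levels only. That construction is precisely what Jeanjean and Lu carry out in Section 2 of \cite{JeanjeanLu}, and it is why the present paper quotes their theorem instead of reproving it. As a blind proof, then, your write-up is an accurate road map of the standard route with its hardest stretch left unbuilt.
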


\section{Proof of Theorem \ref{T1}}

In the proof of Theorem \ref{T1} we will adapt for our case a truncation function found in Peral Alonso \cite[Chapter 2, Theorem 2.4.6]{peral}.

In what follows, we will consider the functional $J:H_{rad}^{1}(\mathbb{R}^N) \to \mathbb{R}$ given by
$$
J(u)=\frac{1}{2}\int_{\mathbb{R}^N}|\nabla u|^2 \,dx-\frac{\mu}{q}\int_{\mathbb{R}^N} |u|^q \,dx -\frac{1}{2^*}  \int_{\mathbb{R}^N}|u|^{2^*}\,dx,
$$
restricts to the sphere in $L^2(\mathbb{R}^N)$, given by
$$
S(a)=\{u \in H_{rad}^{1}(\mathbb{R}^N)\,:\, | u |_2=a\, \}.
$$
By the Sobolev embedding and the Gagliardo-Nirenberg inequality (see \cite[Theorem 1.3.7, page 9]{CazenaveLivro}  ), we have
\begin{equation}\label{Gagliardo}
|u|^{q}_{q} \leq C|u|^{((1-\beta )q}_{2} \vert \nabla u\vert^{\beta q}_{2} , \ \mbox{in}\ \mathbb{R}^N (N\geq 2),\ \beta=N(\frac{1}{2}-\frac{1}{q}),
\end{equation}
for some positive constant $C=C(q, N)>0.$ Hence,
\begin{eqnarray*}J(u) &\geq& \frac{1}{2}\int_{\mathbb{R}^N} |\nabla u|^2 \,dx-\frac{\mu C a^{(1-\beta)q}}{q}\left(\int_{\mathbb{R}^N}|\nabla u|^2 \,dx \right)^{\frac{\beta q}{2} }-\frac{1}{2^{*}S^{\frac{2^{*}}{2}}  }\left(\int_{\mathbb{R}^N}|\nabla u|^{2}\,dx\right)^{\frac{2^*}{2}},\\
&=& h( |\nabla u|_2),
\end{eqnarray*}
where
$$h(r)=\frac{1}{2} r^2 -\frac{\mu C a^{(1-\beta)q}}{q}r^{\beta q}-\frac{1}{2^* S^{\frac{2^*}{2}} } r^{2^*}.$$
 Recalling that $2< q< 2+\frac{4}{N},$ so that $\beta q < 2,$ then there exists $\alpha>0$ such that if $\mu a^{(1-\beta)q}<\alpha$, the function $h$ attains its positive local maximum (see Figure $(a)$).

For $0< R_0< R_1<\infty$ (those in Figure $(a)$ below), fix
$\tau:\mathbb{R}^+ \rightarrow [0,1]$ as being a nonincreasing and $C^\infty$ function that satisfies
$$
\tau(x)=\left\{ \begin{array}{rcr} 1 &\mbox{if}& x\leq R_0,\\ 0 &\mbox{if}& x \geq R_1.\end{array}\right.
$$
In the sequel, let us consider the truncated functional
$$
J_T(u)=\frac{1}{2}\int_{\mathbb{R}^N}|\nabla u|^2 \,dx-\frac{\mu}{q}\int_{\mathbb{R}^N} |u|^q \,dx -\frac{\tau(\vert \nabla u\vert_2)}{2^*}  \int_{\mathbb{R}^N}|u|^{2^*}\,dx.
$$
Thus
\begin{eqnarray*}J_T(u) &\geq& \frac{1}{2}\int_{\mathbb{R}^N} |\nabla u|^2 \,dx-\frac{\mu C a^{(1-\beta)q}}{q}\left(\int_{\mathbb{R}^N}|\nabla u|^2 \,dx \right)^{\frac{\beta q}{2} }-\frac{\tau(\vert \nabla u\vert_2)}{2^{*}S^{\frac{2^{*}}{2}}  }\left(\int_{\mathbb{R}^N}|\nabla u|^{2}\,dx\right)^{\frac{2^*}{2}},\\
&=& \overline{h}( |\nabla u|_2),
\end{eqnarray*}
where
$$\overline{h}(r)=\frac{1}{2} r^2 -\frac{\mu C a^{(1-\beta)q}}{q}r^{\beta q}-\frac{1}{2^* S^{\frac{2^*}{2}} } r^{2^*}\tau(r)\,\ \mbox{(See  Figure}\ (b)).$$
 \begin{center}

\begin{minipage}{7cm}
 \begin{center}

\begin{tikzpicture}

\draw[->] (-0.5,0) -- (3.5,0);
\draw[->] (0,-1.5) -- (0,2);
\node[below] at (3.5,0){$r$};
\node[left] at (0,2){$h(r)$};
\node[below] at (1.6,0){$R_0$};
\node[below] at (2.8,0){$R_1$};
\node[left] at (0,-0.2){$0$};
\node[below] at (1.5,-2.2){Figure $(a)$};
 \draw[black,domain=0:3,smooth]
plot({\x},{(\x)*(1.5-\x)*(\x-2.5)});
\filldraw [gray] (1.5,0) circle (2pt);
\filldraw [gray] (2.5,0) circle (2pt);
   \end{tikzpicture}
   \end{center}
 \end{minipage}\hfill
\begin{minipage}{7cm}
 \begin{center}

\begin{tikzpicture}

\draw[->] (-0.5,0) -- (3.5,0);
\draw[->] (0,-1.5) -- (0,2);
\node[below] at (3.5,0){$r$};
\node[left] at (0,2){$\overline{h}(r)$};
\node[below] at (2.1,0){$R_0$};
\filldraw [gray] (2,0) circle (2pt);
\node[left] at (0,-0.2){$0$};
\node[below] at (1.5,-2){Figure $(b)$};
 \draw[black,domain=0:2.7,smooth]
plot({\x},{(\x)^2-2*(\x)});

   \end{tikzpicture}
   \end{center}
   \end{minipage}

\end{center}
Without loss of generality, hereafter we will assume that
\begin{equation} \label{R0small}
\frac{1}{2} r^2 -\frac{1}{2^* S^{\frac{2^*}{2}} } r^{2^*} \geq 0, \quad  \forall \, r \in [0,R_0] \quad \mbox{and} \quad R_0 < S^{\frac{N}{2}}.
\end{equation}

Now we will give some important properties of $J_T$.
\begin{lemma}\label{Lemma 4.2}
\noindent (i) $J_T \in C^1(H_{rad}^{1}(\mathbb{R}^N), \mathbb{R}).$\\
\noindent (ii) If $J_T(u) \leq 0$ then $\vert\nabla u\vert_ 2 < R_0 ,$ and $J(v)=J_T(v),$ for all $ v$ in a small neighborhood  of $u$ in $H_{rad}^{1}(\mathbb{R}^N)$.\\
\noindent (iii) There exists $\mu_1 >0$  such that, if $0<\mu <\mu_1,$ then $J_T$ verifies a local Palais-Smale condition on $S(a)$ for the level $c < 0$.
\end{lemma}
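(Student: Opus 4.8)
The plan is to treat the three assertions in order; items (i) and (ii) are structural, while (iii) carries the analytic weight. For (i) I would split $J_T = J_0 - \tfrac{1}{2^*}\Phi$, where $J_0(u)=\tfrac12|\nabla u|_2^2-\tfrac{\mu}{q}\int_{\mathbb R^N}|u|^q\,dx$ and $\Phi(u)=\tau(|\nabla u|_2)\int_{\mathbb R^N}|u|^{2^*}\,dx$. The functional $J_0$ is of class $C^1$ because $H^1_{rad}(\mathbb R^N)\hookrightarrow L^q(\mathbb R^N)$ with $q\in(2,2^*)$, and $u\mapsto\int_{\mathbb R^N}|u|^{2^*}\,dx$ is $C^1$ as well. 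It remains to see that $u\mapsto\tau(|\nabla u|_2)$ is $C^1$: on $\{u:\nabla u\neq 0\}$ it is the composition of the smooth function $\tau$ with the $C^1$ map $u\mapsto|\nabla u|_2$, while on a neighborhood of $\{\nabla u=0\}$ it equals $1$ identically since $\tau\equiv1$ near the origin; hence it is $C^1$ throughout, and the product rule gives $\Phi\in C^1$ and thus $J_T\in C^1(H^1_{rad}(\mathbb R^N),\mathbb R)$.

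For (ii) I would exploit the barrier $\overline h$. Since $\tau\le1$ we have $\overline h\ge h$ pointwise, with equality on $[0,R_0]$, and for $r\ge R_1$ the critical term is switched off so that $\overline h(r)=\tfrac12r^2-\tfrac{\mu Ca^{(1-\beta)q}}{q}r^{\beta q}$, which is positive for $r$ large because $\beta q<2$. For $\mu<\mu_1$ (equivalently $\mu a^{(1-\beta)q}<\alpha$) the function $h$ is negative only on an initial interval $(0,z_1)$ and positive between its two zeros, so (Figure $(b)$) $\overline h>0$ on $[R_0,\infty)$ and $\{r\ge0:\overline h(r)\le0\}=[0,z_1]$ with $z_1<R_0$. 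Consequently, for $u\in S(a)$ the estimate $J_T(u)\ge\overline h(|\nabla u|_2)$ established above forces $|\nabla u|_2\le z_1<R_0$ whenever $J_T(u)\le0$. Finally, since $u\mapsto|\nabla u|_2$ is continuous and $\tau\equiv1$ on $[0,R_0]$, there is a whole neighborhood of $u$ in $H^1_{rad}(\mathbb R^N)$ on which $|\nabla v|_2<R_0$, so $\tau(|\nabla v|_2)=1$ and therefore $J(v)=J_T(v)$.

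For (iii) let $\{u_n\}\subset S(a)$ satisfy $J_T(u_n)\to c<0$ and $\|J_T'|_{S(a)}(u_n)\|\to0$. By (ii), $J_T(u_n)<0$ for large $n$ yields $|\nabla u_n|_2<R_0$, so $\{u_n\}$ is bounded in $H^1_{rad}(\mathbb R^N)$ and $J_T$ coincides with $J$ near each $u_n$. Up to a subsequence $u_n\rightharpoonup u$ in $H^1_{rad}$, $u_n\to u$ in $L^q(\mathbb R^N)$ for every $q\in(2,2^*)$ (here the compact radial embedding handles the subcritical term), and $u_n\to u$ a.e. The constrained condition produces Lagrange multipliers $\lambda_n=\tfrac{1}{a^2}\big(|\nabla u_n|_2^2-\mu|u_n|_q^q-|u_n|_{2^*}^{2^*}\big)+o(1)$, which are bounded, so $\lambda_n\to\lambda$ along a subsequence, and letting $n\to\infty$ in $J'(u_n)-\lambda_n u_n\to0$ shows that $u$ solves $-\Delta u=\lambda u+\mu|u|^{q-2}u+|u|^{2^*-2}u$ weakly. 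Writing $X=\lim|\nabla u_n|_2^2$, $Y=|u|_q^q$, $Z=\lim|u_n|_{2^*}^{2^*}$, the identities $\lambda a^2=X-\mu Y-Z$ and $c=\tfrac12X-\tfrac{\mu}{q}Y-\tfrac{1}{2^*}Z$ give $\lambda a^2=2c+\mu Y(\tfrac2q-1)+Z(\tfrac{2}{2^*}-1)<0$, hence $\lambda<0$.

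The crux is upgrading weak to strong convergence against the critical nonlinearity. Setting $w_n=u_n-u$, the Brezis--Lieb lemma gives $|\nabla u_n|_2^2=|\nabla u|_2^2+|\nabla w_n|_2^2+o(1)$ and $|u_n|_{2^*}^{2^*}=|u|_{2^*}^{2^*}+|w_n|_{2^*}^{2^*}+o(1)$, while $|u_n|_q^q\to|u|_q^q$; put $A=\lim|\nabla w_n|_2^2$, $B=\lim|w_n|_{2^*}^{2^*}$ and $\delta=\lim|w_n|_2^2=a^2-|u|_2^2\ge0$. Testing $J'(u_n)-\lambda_nu_n\to0$ with $w_n$ and subtracting the weak equation for $u$ yields $A-B=\lambda\delta\le0$, so $A\le B$; combined with the Sobolev inequality $B\le S^{-2^*/2}A^{2^*/2}$ this forces $A=0$ or $A\ge S^{N/2}$. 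But by (ii) and the choice of $R_0$ in \eqref{R0small} the truncation keeps $A\le\limsup_n|\nabla u_n|_2^2$ strictly below the critical threshold $S^{N/2}$, so necessarily $A=0$; then $B=0$, and $\lambda\delta=0$ with $\lambda<0$ gives $\delta=0$. Thus $\nabla u_n\to\nabla u$ in $L^2$ and $u_n\to u$ in $L^2$, i.e. $u_n\to u$ in $H^1_{rad}(\mathbb R^N)$. I expect this concentration dichotomy---excluding the critical bubble by keeping the energy window below $S^{N/2}$ via the truncation level $R_0$, while simultaneously suppressing the possible $L^2$-leakage at infinity through $\lambda<0$---to be the main obstacle of the whole lemma.
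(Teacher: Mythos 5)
Your proposal is correct, and for the substantive part (iii) it takes a genuinely different route from the paper. For the sign of the multiplier, the paper first shows $u\neq 0$ by contradiction (if $u=0$ the subcritical term vanishes, and \eqref{R0small} forces $0>c\geq 0$) and then invokes the Pohozaev identity for the limit equation to get $\lambda_a<0$; you instead obtain $\lambda a^{2}=2c+\mu Y\left(\tfrac{2}{q}-1\right)+Z\left(\tfrac{2}{2^{*}}-1\right)\leq 2c<0$ directly from the multiplier identity and the energy level, which needs neither the nontriviality of $u$ nor Pohozaev. For compactness, the paper runs Lions' concentration--compactness principle: atoms of the limit measures are excluded because each atom costs $\kappa_{j}\geq S^{N/2}$, which the truncation bound $|\nabla u_n|_{2}<R_0$ with $R_0<S^{N/2}$ forbids; it then upgrades $L^{2^{*}}_{loc}$ convergence to global $L^{2^{*}}$ convergence via the radial decay estimate $|u_n(x)|\leq C|x|^{-(N-1)/2}$ plus dominated convergence, and finally concludes strong $H^{1}$ convergence from convergence of the equivalent norm $|\nabla\cdot|_{2}^{2}-\lambda_a|\cdot|_{2}^{2}$. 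You replace all of this by the Brezis--Lieb decomposition $w_n=u_n-u$, the identity $A-B=\lambda\delta$, and the Sobolev dichotomy $A=0$ or $A\geq S^{N/2}$, with the truncation excluding the bubble and $\lambda<0$ killing the mass defect $\delta$. Your argument is more elementary and self-contained (no measure-theoretic machinery, no Strauss-type decay lemma; radiality enters only through the compact embedding into $L^{q}$), and it handles concentration and possible vanishing or mass leakage in a single stroke, since $\delta=0$ automatically yields $|u|_{2}=a$ and hence $u\neq0$; the paper's route, modeled on \cite{GP}, localizes where compactness could fail and is the template that extends to the treatment in Section 4. Two cosmetic points you share with (rather than add to) the paper: the comparison of $\limsup_n|\nabla u_n|_{2}^{2}$ with $R_0$ versus $R_0^{2}$ (both texts conflate them; since \eqref{R0small} holds ``without loss of generality'' one may simply also require $R_0^{2}<S^{N/2}$), and in (ii) under the paper's figure convention the set $\{\overline{h}\leq 0\}$ is $[0,R_0]$ rather than $[0,z_1]$ with $z_1<R_0$, which affects only the strictness of $|\nabla u|_{2}<R_0$ and is equally glossed over in the paper, where (i)--(ii) are declared trivial.
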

\begin{proof}
\noindent {\bf (i)} and {\bf (ii)}  are trivial.\\
\noindent {Proof of (iii):} Let $(u_n)$ be a $(PS)_c$ sequence of $J_T$ restricts to $S(a)$ with $c < 0$. Then by the definition of $J_T$, we must have $|\nabla u_n |_2 < R_0$ for $n$ large enough, and so, $(u_n)$ is also a $(PS)_c$ sequence of $J$ restricts to $S(a)$ with $c < 0$. Hence,
\begin{equation} \label{gamma(a)}
	J(u_n) \to c \quad \mbox{as} \quad n \to +\infty,
\end{equation}
and
\begin{equation*} \label{der1}
	\|J|'_{S(a)}(u_n)\| \to 0 \quad \mbox{as} \quad n \to +\infty.
\end{equation*}
Setting the functional $\Psi:H^{1}(\mathbb{R}^N) \to \mathbb{R}$ given by
$$
\Psi(u)=\frac{1}{2}\int_{\mathbb{R}^N}|u|^2\,dx,
$$
it follows that $S(a)=\Psi^{-1}(\{a^2/2\})$. Then, by Willem \cite[Proposition 5.12]{Willem}, there exists $(\lambda_n) \subset \mathbb{R}$ such that
$$
||J'(u_n)-\lambda_n\Psi'(u_n)||_{H^{-1}} \to 0 \quad \mbox{as} \quad n \to +\infty.
$$
Hence,
\begin{equation} \label{EQ10}
	-\Delta u_n-\mu|u_n|^{q-2}u_n-|u_n|^{2^*-2}u_n=\lambda_nu_n\ + o_n(1) \quad \mbox{in} \quad (H_{rad}^{1}(\mathbb{R}^N))^*.
\end{equation}

Since $J_T$ is coercive on $S(a)$, we have that $(|\nabla u_n|_2)$ is bounded, from where it follows that $(u_n)$ is a bounded sequence in $H_{rad}^{1}(\mathbb{R}^N)$. Therefore, for some subsequence, there exists $u \in H_{rad}^{1}(\mathbb{R}^N)$ such that
$$
u_n \rightharpoonup u \quad \mbox{in} \quad H_{rad}^{1}(\mathbb{R}^N)
$$
and
\begin{equation} \label{convq}
	\lim_{n \to +\infty}\int_{\mathbb{R}^N}|u_n|^q\,dx=\int_{\mathbb{R}^N}|u|^q\,dx,
\end{equation}
because $q \in (2,2+\frac{4}{N})$.
\begin{claim} The weak limit $u$ is nontrivial, that is, $u \not= 0$.
\end{claim}
If we assume that $u=0$, we would have
\begin{equation} \label{qzero}
	\lim_{n \to +\infty}\int_{\mathbb{R}^N}|u_n|^q\,dx=0.
\end{equation}
From the definition of $J_T$, we know that
$$
J(u_n)=J_T(u_n) \geq \frac{1}{2}\int_{\mathbb{R}^N} |\nabla u_n|^2 \,dx-\frac{\mu}{q}\int_{\mathbb{R}^N}|u_n|^q \,dx -\frac{1}{2^{*}S^{\frac{2^{*}}{2}}  }\left(\int_{\mathbb{R}^N}|\nabla u_n|^{2}\,dx\right)^{\frac{2^*}{2}}.
$$
Then, by (\ref{R0small}),
$$
J(u_n) \geq -\frac{\mu}{q}\int_{\mathbb{R}^N}|u_n|^q \,dx, \quad \forall n \in \mathbb{N}.
$$
Taking the limit as $n \to +\infty$ in the last inequality and using (\ref{qzero}), we get
$$
0>c=\lim_{n \to +\infty}J(u_n) \geq - \frac{\mu}{q}\lim_{n \to +\infty}\int_{\mathbb{R}^N}|u_n|^q\,dx=0
$$
which is absurd.

On the other hand, using the fact that $(u_n)$ is bounded in $H_{rad}^{1}(\mathbb{R}^N)$, it follows from (\ref{EQ10}) and $(u_n)\subset S(a)$ that $(\lambda_n)$ is also a bounded sequence, then we can assume that for some subsequence $\lambda_n \to \lambda_a \in \mathbb{R}$. This limit together with (\ref{EQ10}) ensures that $u$ is a nontrivial solution of the equation
\begin{equation} \label{EQUA2}
-\Delta u -\mu|u|^{q-2}u-|u|^{2^*-2}u=\lambda_a u, \quad \mbox{in} \quad \mathbb{R}^N.
\end{equation}
By the Pohozaev's identity, we must have $\lambda_a<0$, because $u$ is nontrivial. Using  the concentration-compactness principle due to Lions \cite{Lions}, we can find an at most countable index set $\mathcal{J}$, sequences $(x_{i})\subset \mathbb{R}^{N}, ( \kappa_{i}), (\nu_{i})\subset (0, \infty)$ such that\\
\noindent $(i)$	\,\, $|\nabla u_n|^{2} \to \kappa$ weakly-$^*$ in the sense of measure \\
\noindent and \\
\noindent $(ii)$ \,\, $|u_n|^{2^*} \to \nu$ weakly-$^*$ in the sense of measure, \\
and
$$
\left\{
\begin{array}{l}
	(a)\quad  \nu=|u|^{2^*}+\sum_{j \in J}\nu_j \delta_{x_j},\\
	(b)\quad  \kappa \geq |\nabla u|^{2}+\sum_{j \in J}\kappa_j \delta_{x_j},\\
	(c)\quad S \nu_j^{\frac{2}{2^*}} \leq \kappa_j,\,\, \forall j\in \mathcal{J},\\
\end{array}
\right.
$$
where $\delta_{x_{j}}$ is the Dirac mass at the point $x_{j}$.\\
As
$$
-\Delta u_n-\mu|u_n|^{q-2}u_n-|u_n|^{2^*-2}u_n=\lambda_n u_n +o_n(1)\quad \mbox{in} \quad (H_{rad}^{1}(\mathbb{R}^N))^*,
$$
we derive that
$$
\int_{\mathbb{R}^N}\nabla u_n \nabla \phi \,dx-\lambda_n\int_{\mathbb{R}^N}u_n \phi \,dx=\mu \int_{\mathbb{R}^N}|u_n|^{q-2}u_n\phi\,dx+ \int_{\mathbb{R}^N}|u_n|^{2^*-2}u_n\phi\,dx, \quad \forall \phi \in H_{rad}^{1}(\mathbb{R}^N).
$$
Now, arguing as in \cite[Lemma 2.3]{GP}, $\mathcal{J}$ is empty or otherwise $\mathcal{J}$ is nonempty but finite. In the case that  $\mathcal{J}$ is nonempty but finite, we must have
$$
\kappa_j \geq {S^{\frac{N}{2}}}, \quad \forall j \in \mathcal{J},
$$
and so,
$$
R_0 \geq \limsup_{n \to +\infty}|\nabla u_n|_{2}^{2} \geq S^{\frac{N}{2}},
$$
which contradicts (\ref{R0small}). Then $\mathcal{J} = \emptyset$, and so,
\begin{equation} \label{localconvergence}
	u_n \to u \quad \mbox{in} \quad  L^{2^*}_{loc}(\mathbb{R}^N).
\end{equation}
\begin{claim} \label{convBR} For each $R>0$, we have
	$$
	u_n \to u \quad \mbox{in} \quad  L^{2^*}(\mathbb{R}^N \setminus B_R(0)).
	$$
\end{claim}	
\noindent Indeed, as $u_n \in H_{rad}^{1}(\mathbb{R}^N)$ for each $n\in N$, we know that
$$
|u_n(x)| \leq \frac{\|u_n\|}{|x|^{\frac{N-1}{2}}}, \quad \mbox{a.e. in} \quad \mathbb{R}^N.
$$
Since $(u_n)$ is a bounded sequence in $H_{rad}^{1}(\mathbb{R}^N)$, we obtain
$$
|u_n(x)| \leq \frac{C}{|x|^{\frac{N-1}{2}}}, \quad \mbox{a.e. in} \quad \mathbb{R}^N,
$$
and so,
$$
|u_n(x)|^{2^*} \leq \frac{C_1}{|x|^{\frac{N(N-1)}{N-2}}}, \quad \mbox{a.e. in} \quad \mathbb{R}^N.
$$
Recalling that  $\frac{C_1}{|\,\cdot\,|^{\frac{N(N-1)}{N-2}}} \in L^{1}(\mathbb{R}^N \setminus B_R(0))$ and $u_n(x) \to u(x)$ a.e. in $\mathbb{R}^N \setminus B_R(0)$, the Lebesgue's Theorem gives
$$
u_n \to u \quad \mbox{in} \quad  L^{2^*}(\mathbb{R}^N \setminus B_R(0)),
$$
showing Claim \ref{convBR}. Now, Claim \ref{convBR} combined with (\ref{localconvergence}) ensures that
$$
u_n \to u \quad \mbox{in} \quad L^{2^*}(\mathbb{R}^N).
$$
On the other hand,
$$
|\nabla u_n|_{2}^{2}-\lambda_n|u_n|_{2}^{2}=\int_{\mathbb{R}^N}(\mu|u_n|^q+|u_n|^{2^*})\,dx+o_n(1),
$$
or yet,
$$
|\nabla u_n|_{2}^{2}-\lambda_a|u_n|_{2}^{2}=\int_{\mathbb{R}^N}(\mu|u_n|^q+|u_n|^{2^*})\,dx+o_n(1).
$$
Now, using the limits
$$
u_n \to u \quad \mbox{in} \quad L^{2^*}(\mathbb{R}^N)
$$
and
$$
u_n \to u \quad \mbox{in} \quad L^{q}(\mathbb{R}^N),
$$
we obtain
$$
\lim_{n \to +\infty}\int_{\mathbb{R}^N}(\mu|u_n|^q+|u_n|^{2^*})\,dx=\int_{\mathbb{R}^N}(\mu|u|^q+|u|^{2^*})\,dx.
$$
By (\ref{EQUA2}), we know that
$$
|\nabla u|_{2}^{2}-\lambda_a|u|_{2}^{2}=\int_{\mathbb{R}^N}(\mu|u|^q+|u|^{2^*})\,dx,
$$
consequently
$$
\lim_{n \to +\infty}(|\nabla u_n|_{2}^{2}-\lambda_a|u_n|_{2}^{2})=|\nabla u|_{2}^{2}-\lambda_a|u|_{2}^{2}.
$$
As $\lambda_a<0$, the last equality yields that
$$
u_n \to u \quad \mbox{in} \quad H_{rad}^{1}(\mathbb{R}^N),
$$
implying that $|u|_{2}=a$. This establishes the desired result.
\end{proof}

Now, for $\epsilon >0$, we define the set
$$
A=\{ u \in H_{rad}^{1}(\mathbb{R}^N)\cap S(a): J_T(u)\leq-\epsilon\}\subset H_{rad}^{1}(\mathbb{R}^N),
$$
which is a closed symmetric subset of $S(a)$, because $J_T$ is even and continuous.

\begin{lemma}\label{Lemma 4.33}
Given $n \in \mathbb{N},$ there are $ \epsilon =\epsilon(n)>0$ and $\mu=\mu(n)>0$ such that $\mathcal{G}(A)\geq n,$ for any $ 0<\epsilon \leq\epsilon(n)$ and  $\mu \geq \mu_n$.
\end{lemma}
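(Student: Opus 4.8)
The plan is to prove the genus lower bound by the classical device of exhibiting an odd continuous map of a Euclidean sphere $\mathbb{S}^{n-1}$ into $A$, and then invoking the monotonicity property of the genus: if $W$ is a closed symmetric set with $0\notin W$ and there is an odd continuous map $\varphi:\mathbb{S}^{n-1}\to W$, then $\mathcal{G}(W)\ge n$ (indeed, composing $\varphi$ with a map $W\to\mathbb{R}^{k}\setminus\{0\}$ realizing $\mathcal{G}(W)=k$ yields an odd continuous map $\mathbb{S}^{n-1}\to\mathbb{R}^{k}\setminus\{0\}$, forcing $n=\mathcal{G}(\mathbb{S}^{n-1})\le k$). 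Since $A$ is already known to be closed, symmetric, and contained in $S(a)$ (so $0\notin A$), it suffices to build such a $\varphi$ with image in $A$. First I would fix any $n$-dimensional subspace $V_n\subset H^{1}_{rad}(\mathbb{R}^N)$, spanned by $n$ linearly independent smooth radial functions, and observe that the $L^2$-sphere $\Sigma_n:=\{u\in V_n:|u|_2=a\}$ is odd-homeomorphic to $\mathbb{S}^{n-1}$ via radial projection, and that on the finite-dimensional space $V_n$ the norms $|\nabla\cdot|_2$, $|\cdot|_q$ and $|\cdot|_{2^*}$ are all equivalent to $|\cdot|_2$.

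Next I would use the $L^2$-preserving dilation $(s\star u)(x):=s^{N/2}u(sx)$, which keeps $s\star u\in S(a)$ for $u\in\Sigma_n$ and scales the three terms of $J$ as
\[
J(s\star u)=\frac{s^2}{2}|\nabla u|_2^2-\frac{\mu}{q}s^{\beta q}|u|_q^q-\frac{s^{2^*}}{2^*}|u|_{2^*}^{2^*},
\]
using $\beta q=N(q-2)/2$ together with $|\nabla(s\star u)|_2^2=s^2|\nabla u|_2^2$, $|s\star u|_q^q=s^{\beta q}|u|_q^q$ and $|s\star u|_{2^*}^{2^*}=s^{2^*}|u|_{2^*}^{2^*}$. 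By the equivalence of norms on $V_n$ there are constants $C_1,C_2>0$ (depending on $n$) with $|\nabla u|_2\le C_1 a$ and $|u|_q^q\ge C_2 a^q$ for every $u\in\Sigma_n$, whence, discarding the nonpositive critical term,
\[
J(s\star u)\le \frac{C_1^2 a^2}{2}\,s^2-\frac{\mu C_2 a^q}{q}\,s^{\beta q}\qquad\text{for all }u\in\Sigma_n.
\]
Because $\beta q<2$, the right-hand side is negative for every small $s>0$ and, for $s$ fixed, tends to $-\infty$ as $\mu\to+\infty$.

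I would then fix $s_0>0$ small enough that $s_0 C_1 a\le R_0$; this forces $|\nabla(s_0\star u)|_2\le R_0$ on all of $\Sigma_n$, so that $\tau(|\nabla(s_0\star u)|_2)=1$ and hence $J_T(s_0\star u)=J(s_0\star u)$ along the image, by Lemma \ref{Lemma 4.2}. With $s_0$ fixed, the displayed bound is a decreasing affine function of $\mu$, so there exist $\mu_n>0$ and $\epsilon(n)>0$ with $J_T(s_0\star u)\le-\epsilon(n)$ for all $u\in\Sigma_n$ whenever $\mu\ge\mu_n$. Consequently the map $\varphi(\xi):=s_0\star \pi(\xi)$, where $\pi:\mathbb{S}^{n-1}\to\Sigma_n$ is the odd homeomorphism above, is continuous and odd (the dilation $s_0\star(\cdot)$ is linear) with image in $A$, so $\mathcal{G}(A)\ge n$. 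Finally, since $A$ only enlarges as $\epsilon$ decreases, the same conclusion holds for every $0<\epsilon\le\epsilon(n)$.

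The step I expect to be the main obstacle is arranging the two competing requirements on the dilation $s_0$ \emph{simultaneously and uniformly over the compact sphere} $\Sigma_n$: the constraint $s_0 C_1 a\le R_0$ must hold so that the truncation is inactive and $J_T=J$ along the image, while $s_0$ must still be large enough relative to $\mu$ and $a$ for the $L^q$-term—dominant precisely because $\beta q<2$—to drive $J_T$ below the fixed level $-\epsilon(n)$. Controlling this interplay, together with the coupling $\mu a^{(1-\beta)q}<\alpha$ inherited from the truncation construction which restricts the admissible pairs $(\mu,a)$, is the delicate point; it is also what forces the threshold $\mu_n$ to depend on $n$ through the norm-equivalence constants $C_1(n),C_2(n)$ of $V_n$. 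The compactness of $\Sigma_n$ and the finite-dimensionality of $V_n$ are exactly what render all these estimates uniform.
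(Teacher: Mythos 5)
Your proposal is correct and follows essentially the same route as the paper: both exhibit the $L^{2}$-sphere of an $n$-dimensional subspace of $H^{1}_{rad}(\mathbb{R}^N)$ inside the sublevel set $\{J_T\le -\epsilon\}$, using equivalence of norms in finite dimensions, inactivity of the truncation there (so $J_T=J$), largeness of $\mu$, and the fact that such a sphere has genus $n$. The only difference is technical: the paper secures the uniform gradient bound by choosing a basis orthogonal in both $L^{2}(\mathbb{R}^N)$ and $D^{1,2}(\mathbb{R}^N)$ with prescribed norms $|\nabla u_j|_2=\rho<R_0$, $|u_j|_2=a$, whereas you obtain $|\nabla (s_0\star u)|_2\le R_0$ by applying the mass-preserving dilation to an arbitrary subspace --- an equally valid device leading to the same conclusion.
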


\begin{proof} Arguing as \cite{GP}, for each $n  \in \mathbb{N},$ let us consider an $n$-dimensional  subspace $E_n \subset H_{rad}^{1}(\mathbb{R}^N)$ that has a basis of the form
	$$
	\mathcal{B}=\{u_1,u_2,...,u_n\}
	$$
	that is orthogonal in $D^{1,2}(\mathbb{R}^N)$ and $L^{2}(\mathbb{R}^N)$  such that $|\nabla u_j|_2=\rho$ and $|u_j|_2=a$, and so, $\|u_j\|=\sqrt{\rho^2 + a^2}$  for $j=1, 2, \cdots, n$. Moreover, set
	$$
	\Upsilon_n=\{s_1u_1+s_2u_2+...+s_nu_n\,:\,s_1^2+s_2^2+....+s_n^2=1\}.
	$$
	It is easy to check that there exists a homomorphism between  $\Upsilon_n$ and the sphere \linebreak $B=\{(y_1,y_2,...,y_n) \in \mathbb{R}^{n} \,:\,y_1^2+y_2^2+....+y_n^2=\rho^2+a^2\}$ of $\mathbb{R}^{n}$. Therefore, by the properties of genus, we have $\mathcal{G}(\Upsilon_n)=n.$ For $0<\rho<R_0$ and $v \in \Upsilon_n$, we have $|\nabla v|_2 = \rho <R_0$, and so,
	$$
	J_T(v) =J( v) =\frac{1}{2}\rho^2-\frac{1}{2^*}\rho^{2^*}\int_{\mathbb{R}^N} \left|\frac{v}{\rho}\right|^{2^*}\, dx -\frac{\mu}{q}\rho^q \int_{\mathbb{R}^N} \left|\frac{v}{\rho}\right|^q \, dx.
	$$
	Recall that dim$E_n=n$, all the norms are equivalents and define
	$$
	\alpha_n=\inf\left\{ \int_{\mathbb{R}^N} |w|^{2^*}\, dx: w\in S(a/\rho) \cap E_n, |\nabla w|_2=1\right\} >0
	$$
	and
	$$
	\beta_n=\inf\left\{ \int_{\mathbb{R}^N} |w|^{q}\, dx: w\in S(a/\rho) \cap E_n, |\nabla w|_2=1\right\} >0.
	$$
	Then,
$$
	J_T(v) \leq  \frac{1}{2}\rho^2-\frac{1}{2^*}\rho^{2^*}\alpha_n  -\frac{\mu}{q}\rho^q \beta_n.
	$$

From this, we can choose $\mu=\mu(n)>0$, $\epsilon=\epsilon(n)>0$ and $\rho< R_0$ such that
$$
J_T(v)\leq -\epsilon, \quad \forall v \in \Upsilon_n.
$$
Hence, $\Upsilon_n \subset A$ and $\mathcal{G} (A) \geq \mathcal{G}(\Upsilon_n)=n$.

\end{proof}

\begin{proposition}\label{Lemma 4.4}  Let $\Sigma_k=\{ C\subset H^{1}_{rad}(\mathbb{R}^N)\cap S(a): C  \ \mbox{is closed}\ , C=-C\,\,\text{and}\,\,\mathcal{G}(C)\geq k\}$.
	Let $$c_k=\inf_{C\in \Sigma_k} \sup_{u\in C} J_T(u), $$
	and $K_c=\{ u \in H^{1}_{rad}(\mathbb{R}^N)\cap S(a): J^{\prime} _T(u)=0, J_T(u)=c\},$
	and suppose $0< \mu< \mu_1$, where $\mu_1$ is the constant in Lemma \ref{Lemma 4.2}.
	If $c=c_k=c_{k+1}= \ldots =c_{k+r},$ we have $\mathcal{G}(K_c)\geq r+1.$  In particular, $J_T$ has at least $k$ nontrivial critical points.
\end{proposition}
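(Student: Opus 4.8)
The plan is to recognize that this proposition is precisely the concrete instance of the abstract Minimax Theorem \ref{minimaxJL} obtained by taking $\mathcal{E}=H^{1}_{rad}(\mathbb{R}^N)$, $\mathcal{H}=L^{2}(\mathbb{R}^N)$, $m=a^{2}$, $\mathcal{M}=S(a)$ and $I=J_T$; note that the family $\Sigma_k$ here is exactly the family $\Gamma_k$ of that theorem. The whole argument then reduces to checking that $J_T$ satisfies the hypotheses of Theorem \ref{minimaxJL} on the range of levels we care about, after which statements (i) and (ii) deliver the conclusion verbatim (with $l=r+1$).

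First I would record the structural facts. The functional $J_T$ is even, since $|\nabla u|_2$, $|u|_q$ and $|u|_{2^*}$ are invariant under $u\mapsto -u$ and $\tau$ depends only on $|\nabla u|_2$; and $J_T\in C^{1}(H^{1}_{rad}(\mathbb{R}^N),\mathbb{R})$ by Lemma \ref{Lemma 4.2}(i). Next I would verify that $J_T$ is bounded from below on $S(a)$: from the estimate $J_T(u)\geq\overline{h}(|\nabla u|_2)$ together with $\beta q<2$, which forces $\overline{h}(r)\to +\infty$ as $r\to +\infty$ (because $\tau\equiv 0$ for $r\geq R_1$), so $\overline{h}$ is bounded below and coercive. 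In particular $J_T$ is coercive on $S(a)$ and $c_k>-\infty$ for every $k$.

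The decisive point is to place all the relevant levels strictly below $0$, so that the only available compactness — the local Palais--Smale condition at negative levels from Lemma \ref{Lemma 4.2}(iii), valid for $0<\mu<\mu_1$ — can be invoked. Here Lemma \ref{Lemma 4.33} is exactly what is needed: for each $k\leq n$ it produces a closed symmetric set $A\subset S(a)$ with $\mathcal{G}(A)\geq n\geq k$ and $J_T\leq -\epsilon$ on $A$. Thus $A\in\Sigma_k$, so $\Sigma_k\neq\emptyset$, and since $\sup_A J_T\leq -\epsilon$ we get $c_k\leq -\epsilon<0$ for $k=1,\dots,n$. Combined with $c_k>-\infty$ this gives $-\infty<c_1\leq\cdots\leq c_n\leq -\epsilon<0$, so every $c_k$ lies in the region where $(PS)_{c_k}$ holds. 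Applying Theorem \ref{minimaxJL}(i), each such $c_k$ is a critical value, and applying Theorem \ref{minimaxJL}(ii) to the string $c=c_k=\cdots=c_{k+r}<0$ (i.e. $l=r+1$) yields $\mathcal{G}(K_c)\geq r+1$; the values $c_1\leq\cdots\leq c_n$, counted through the genus when they coincide, then furnish at least $n$ critical points of $J_T$ on $S(a)$, all nontrivial since $0\notin S(a)$.

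Finally, a word on where the work sits should one prefer a self-contained proof rather than citing Theorem \ref{minimaxJL}. The hard part is the equal-level estimate $\mathcal{G}(K_c)\geq r+1$: assuming $\mathcal{G}(K_c)\leq r$, one uses $(PS)_c$ to deduce that $K_c$ is compact, chooses a symmetric closed neighborhood $U$ of $K_c$ with $\mathcal{G}(\overline U)=\mathcal{G}(K_c)$, and must then construct an odd ($\mathbb{Z}_2$-equivariant) deformation $\eta$ on the constraint manifold $S(a)$ with $\eta(\{J_T\leq c+\delta\}\setminus U)\subset\{J_T\leq c-\delta\}$. Feeding a near-optimal $C\in\Sigma_{k+r}$ into $\eta$ and using monotonicity and subadditivity of the genus, $\mathcal{G}(\overline{C\setminus U})\geq (k+r)-r=k$, so $\eta(\overline{C\setminus U})\in\Sigma_k$ with $\sup J_T\leq c-\delta$, contradicting $c_k=c$. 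The genuine obstacle in this route is the construction of the equivariant deformation on the sphere $S(a)$ (a complete $C^1$ Finsler manifold), via a symmetric pseudo-gradient flow away from the critical set — precisely the content already encapsulated in Theorem \ref{minimaxJL}, which is why I would simply invoke it.
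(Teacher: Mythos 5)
Your proposal is correct and follows essentially the same route as the paper: both verify the hypotheses of the abstract minimax theorem (Theorem \ref{minimaxJL}) by combining Lemma \ref{Lemma 4.33} (to get $\Sigma_k\neq\emptyset$ and $c_k\leq-\epsilon<0$), the coercivity/boundedness from below of $J_T$ (to get $c_k>-\infty$), and Lemma \ref{Lemma 4.2}(iii) (the $(PS)_c$ condition for $c<0$ when $0<\mu<\mu_1$), and then invoke that theorem to conclude. Your write-up is somewhat more detailed than the paper's (explicitly checking evenness and coercivity, and sketching the deformation argument hidden inside Theorem \ref{minimaxJL}), but the underlying argument is identical.
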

\begin{proof}
	For $\epsilon >0$, define
	$$
	A_{-\epsilon}=\{ u \in H^{1}(\mathbb{R}^N)\cap S(a): J_T(u)\leq-\epsilon\}\subset H_{rad}^{1}(\mathbb{R}^N).
	$$
	From Lemma \ref{Lemma 4.33}, for all $k \in \mathbb{N},$  there exists $\epsilon=
	\epsilon(k)>0$ such that $\mathcal{G}(A_{-\epsilon}) \geq k.$
	Since $J_T$ is continuous and even, $A_{-\epsilon}\in \Sigma_k$. Then $c_k\leq -\epsilon<0$ for any $k$.
	On the other hand, $J_T$ is  bounded from below, then  this implies that $ c_k> -\infty$ for any $k$.
	Let us assume that $c=c_k=c_{k+1}= \ldots =c_{k+r},$ notice that $c<0$, then by Lemma \ref{Lemma 4.2}, $J_T$ verifies the Palais-Smale condition at the level $c<0$, from where it follows that $K_c$ is a compact set.
	Now, the proposition follows from Theorem \ref{minimaxJL}.
\end{proof}

\subsection{Proof Theorem \ref{T1}}

The proof of Theorem \ref{T1} follows from Proposition \ref{Lemma 4.4}, because the critical points of $J_T$ that were found in that proposition are in fact critical points of $J$, see Lemma \ref{Lemma 4.2}-(ii).

\section{Proof of Theorem \ref{T2}}

In the proof of Theorem \ref{T2} we shall follow the same ideas explored in Section 3. However, since we will work with a nonlinear term which has an exponential critical growth,  some estimates are different and more difficult. In the present case, we shall apply the Trudinger-Moser inequality proved by Cao \cite{Cao} that plays an important role in our approach.
\begin{lemma}\label{Cao}(Trudinger-Moser inequality by Cao \cite{Cao})
	If $\alpha>0$ and $u\in H^{1}(\mathbb{R}^{2})$, then
	\begin{equation*}
		\int_{\mathbb{R}^{2}}(e^{\alpha  u^{2}}-1)dx<+\infty.
	\end{equation*}
	Moreover, if $|\nabla u|_{2}^{2}\leq 1$, $|  u|_{2}\leq M<+\infty$, and $0<\alpha< 4\pi$, then there exists a positive constant $C(M, \alpha)$, which depends only on $M$ and $\alpha$,  such that
	\begin{equation*}
		\int_{\mathbb{R}^{2}}(e^{\alpha u^{2}}-1)dx\leq C(M, \alpha).
	\end{equation*}
\end{lemma}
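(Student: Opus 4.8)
The plan is to establish both assertions at once by splitting $\mathbb{R}^2$ into the region where $u$ is small and the region where it is large, and reducing the large region to the classical Moser--Trudinger inequality on sets of finite measure. First I would set $\Omega:=\{x\in\mathbb{R}^2:|u(x)|>1\}$ and note that $|\Omega|\le|u|_2^2<+\infty$ by Chebyshev's inequality. On the complement, convexity of $s\mapsto e^{\alpha s}-1$ gives the chord bound $e^{\alpha s}-1\le(e^\alpha-1)s$ for $s\in[0,1]$, which applied with $s=u^2$ yields
\[
\int_{\{|u|\le1\}}\big(e^{\alpha u^2}-1\big)\,dx\le(e^\alpha-1)\int_{\mathbb{R}^2}u^2\,dx=(e^\alpha-1)|u|_2^2 .
\]
This term is always finite and, under the hypotheses of the second part, is at most $(e^\alpha-1)M^2$; hence the whole problem is pushed onto $\int_\Omega(e^{\alpha u^2}-1)\,dx$.

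On $\Omega$ the key device is the truncation $v:=(|u|-1)^+$. It lies in $H^1(\mathbb{R}^2)$, vanishes off $\Omega$ (so that $v\in H_0^1(\Omega)$ in the usual sense) and satisfies $|\nabla v|\le|\nabla u|$ a.e., hence $|\nabla v|_2\le|\nabla u|_2$. Since $|u|=1+v$ on $\Omega$, the elementary inequality $(1+v)^2\le(1+\epsilon)v^2+(1+\tfrac{1}{\epsilon})$, valid for every $\epsilon>0$, gives on $\Omega$
\[
e^{\alpha u^2}\le e^{\alpha(1+\frac{1}{\epsilon})}\,e^{\alpha(1+\epsilon)v^2} .
\]
This reduces everything to estimating $\int_\Omega e^{\alpha(1+\epsilon)v^2}\,dx$ for $v\in H_0^1(\Omega)$ with $|\Omega|<\infty$. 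The engine I would invoke is the sharp Moser--Trudinger inequality on finite-measure domains: if $w\in H_0^1(D)$, $|D|<\infty$ and $|\nabla w|_2\le1$, then $\int_D(e^{4\pi w^2}-1)\,dx\le C|D|$ with $C$ universal. I would deduce this from Moser's inequality on balls by Schwarz symmetrization, using the P\'olya--Szeg\H{o} inequality $|\nabla w^*|_2\le|\nabla w|_2$ and the equimeasurability identity $\int(e^{4\pi w^2}-1)=\int(e^{4\pi(w^*)^2}-1)$.

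For the uniform estimate I would then use $|\nabla v|_2\le|\nabla u|_2\le1$ together with $\alpha<4\pi$: choosing $\epsilon>0$ so small that $\alpha(1+\epsilon)\le4\pi$ and using $e^{\alpha(1+\epsilon)v^2}\le e^{4\pi v^2}$ pointwise, the finite-measure inequality gives $\int_\Omega(e^{\alpha(1+\epsilon)v^2}-1)\le C|\Omega|\le CM^2$, and consequently
\[
\int_\Omega\big(e^{\alpha u^2}-1\big)\,dx\le e^{\alpha(1+\frac{1}{\epsilon})}(1+C)\,|\Omega|\le C(M,\alpha),
\]
which with the small-region bound proves the second claim. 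For the finiteness statement, where one can assume neither $|\nabla u|_2\le1$ nor $\alpha<4\pi$, I would instead fix $\epsilon=1$ and remove the dependence on $\alpha$ and $|\nabla v|_2$ by density: writing $v=\phi+\rho$ with $\phi\in C_c^\infty(\Omega)$ and $|\nabla\rho|_2^2<\pi/\alpha$, the bound $(a+b)^2\le2a^2+2b^2$ and the boundedness of $\phi$ give $e^{2\alpha v^2}\le C_\phi\,e^{4\alpha\rho^2}$ on $\Omega$, and since now $4\alpha|\nabla\rho|_2^2<4\pi$ the finite-measure inequality applied to $\rho/|\nabla\rho|_2$ yields $\int_\Omega e^{4\alpha\rho^2}\le(1+C)|\Omega|<+\infty$. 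The main obstacle is concentrated entirely in the large region: one must justify rigorously the finite-measure, sharp-constant Moser--Trudinger inequality through symmetrization (and the membership $v\in H_0^1(\Omega)$), and in the uniform case one must be certain that the multiplicative loss $(1+\epsilon)$ produced by the truncation can be absorbed strictly below the critical exponent $4\pi$ --- this is precisely where the hypothesis $\alpha<4\pi$ enters and where the sharpness of Moser's constant is indispensable.
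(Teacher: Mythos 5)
The paper itself offers no proof of this lemma: it is quoted as a known result of Cao \cite{Cao} (proofs of this type in the literature symmetrize $u$ to a radial function and split the plane by radius, using Moser's inequality on a ball and the radial decay of $H^1_{rad}$ functions outside it). Your argument is therefore a genuinely independent, self-contained derivation, and it follows the other standard route: split by the \emph{value} of $u$ rather than by radius, absorb $\{|u|\le 1\}$ with the chord bound $e^{\alpha s}-1\le(e^{\alpha}-1)s$, and reduce $\Omega=\{|u|>1\}$, via the truncation $v=(|u|-1)^+$ and $(1+v)^2\le(1+\epsilon)v^2+1+\frac{1}{\epsilon}$, to Moser's sharp inequality on sets of finite measure. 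The quantitative steps are all correct: Chebyshev gives $|\Omega|\le |u|_2^2\le M^2$, the multiplicative loss $(1+\epsilon)$ is absorbed precisely because $\alpha<4\pi$ is strict (so the sharp constant $4\pi$ is indispensable, as you note), the resulting constant depends only on $M$ and $\alpha$, and the density decomposition $v=\phi+\rho$ with $|\nabla\rho|_2^2<\pi/\alpha$ is exactly the right device to remove the restrictions on $\alpha$ and $|\nabla u|_2$ in the finiteness assertion.

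One step does need repair, and it is the one you flagged: the claim that $v\in H^1_0(\Omega)$ ``in the usual sense''. Since $u$ is merely an $H^1$ function, $\Omega=\{|u|>1\}$ is defined only up to a null set and need not be open, so $H^1_0(\Omega)$ is not even well defined as written. The fix is to discard $\Omega$ as a domain altogether and state your engine for functions $w\in H^1(\mathbb{R}^2)$ whose support has finite measure: if $|\nabla w|_2\le 1$ and $|\{w\neq 0\}|<\infty$, then $\int_{\mathbb{R}^2}(e^{4\pi w^2}-1)\,dx\le C\,|\{w\neq 0\}|$. This version is proved by symmetrizing in the whole plane: $w^*$ is radially decreasing, vanishes outside the ball $B$ with $|B|=|\{w\neq 0\}|$, hence lies in $H^1_0(B)$ (a radially decreasing $H^1$ function vanishing outside $B$ is approximated in $H^1$ by $(w^*-\delta)^+$, which has compact support in $B$), and P\'olya--Szeg\H{o} plus equimeasurability reduce the integral to Moser's inequality on $B$. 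Your truncation qualifies because $|\{v\neq 0\}|=|\Omega|<\infty$; no openness of $\Omega$ is needed. Likewise, in the density step take $\phi\in C_c^\infty(\mathbb{R}^2)$ (density of test functions in $H^1(\mathbb{R}^2)$) rather than $C_c^\infty(\Omega)$; then $\rho=v-\phi$ still has support of finite measure, which is all the engine requires. With these substitutions your proof is complete.
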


From  $(f_1)$ and $(f_2)$, we know that fixed $\varrho >2$, for any $\zeta>0$ and $\alpha>4\pi$, there exists a constant $C>0$, which depends on $q$, $\alpha$, $\zeta$, such that
\begin{equation}
	\label{1.2}
	|f(t)|\leq\zeta |t|^{\tau}+C|t|^{\varrho-1}(e^{\alpha t^{2}}-1) \text{ for all } t \in \mathbb{R}
\end{equation}
and so,
\begin{equation}
	\label{1.3}
	|F(t)|\leq\zeta |t|^{\tau+1}+C|t|^{\varrho}(e^{\alpha t^{2}}-1) \text{ for all } t \in \mathbb{R}.
\end{equation}
Moreover, it is easy to see that, by \eqref{1.2},
\begin{equation}
	\label{1.4}
	|f(t)t| \leq\zeta |t|^{\tau+1}+C\vert t\vert^{\varrho}(e^{\alpha  t^{2}}-1) \text{ for all } t\in\mathbb{R}.
\end{equation}

Next, we recall some technical lemmas that can found in \cite{CCM}, here we omit their proofs.

\begin{lemma} \label{alphat11} Let $a \in (0,1)$ and $(u_{n})$ be a sequence in $H^{1}(\mathbb{R}^{2})$ with $u_n \in S(a)$ and
	$$
	\limsup_{n \to +\infty} |\nabla u_n |_{2}^{2}  < 1-a^{2}.
	$$
	Then, there exist  $t> 1$, $t$ close to 1,  and $C > 0$  satisfying
	\[
	\int_{\mathbb{R}^{2}}\left(e^{4 \pi |u_n|^{2}} - 1 \right)^t dx \leq C, \,\,\,\,\forall\, n \in \mathbb{N}.
	\]
	
\end{lemma}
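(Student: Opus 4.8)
The plan is to convert the hypothesis on the gradient into a bound on the full $H^1$-norm, and then to reduce the power $t$ to a genuinely subcritical Trudinger--Moser exponent. First I would observe that since $u_n \in S(a)$ we have $|u_n|_2 = a$, so that $\|u_n\|^2 = |\nabla u_n|_2^2 + a^2$. Combining this identity with the hypothesis $\limsup_n |\nabla u_n|_2^2 < 1 - a^2$ gives
$$
\limsup_{n \to +\infty}\|u_n\|^2 = \limsup_{n\to+\infty}|\nabla u_n|_2^2 + a^2 < (1-a^2)+a^2 = 1 .
$$
Hence I may fix $\eta$ with $\limsup_n\|u_n\|^2 < \eta < 1$ and, since $1/\eta > 1$, choose $t>1$ close to $1$ with $t\eta < 1$. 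With this $t$ there is $N_0$ such that $\|u_n\|^2 < \eta$ for all $n \ge N_0$.

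Next I would invoke the elementary inequality
$$
(e^s - 1)^t \le e^{ts} - 1, \qquad s \ge 0,\ t \ge 1,
$$
which follows by checking that $\varphi(s) := e^{ts} - 1 - (e^s-1)^t$ satisfies $\varphi(0)=0$ and $\varphi'(s) = t e^s\bigl(e^{(t-1)s} - (e^s-1)^{t-1}\bigr) \ge 0$, using $0 \le e^s - 1 \le e^s$. Applying this with $s = 4\pi|u_n|^2$ reduces the claim to a uniform bound for $\int_{\mathbb{R}^2}(e^{4\pi t|u_n|^2} - 1)\,dx$.

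To control the latter for $n \ge N_0$, I would normalize by setting $z_n := u_n/\|u_n\|$, which is well defined since $\|u_n\| \ge a > 0$; then $\|z_n\| = 1$, so $|\nabla z_n|_2 \le 1$ and $|z_n|_2 \le 1$. Writing $4\pi t|u_n|^2 = (4\pi t\|u_n\|^2)\,|z_n|^2$ and using $\|u_n\|^2 < \eta$, the exponent coefficient satisfies $\gamma_n := 4\pi t\|u_n\|^2 < 4\pi t\eta =: \gamma^* < 4\pi$. Since $\gamma_n \le \gamma^*$ and $|z_n|^2 \ge 0$, monotonicity yields $e^{\gamma_n|z_n|^2} - 1 \le e^{\gamma^*|z_n|^2} - 1$, and Lemma \ref{Cao} applied with $M = 1$ and the single fixed subcritical exponent $\gamma^* < 4\pi$ produces a constant $C(1,\gamma^*)$ bounding $\int_{\mathbb{R}^2}(e^{4\pi t|u_n|^2}-1)\,dx$ uniformly for $n \ge N_0$. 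For the finitely many indices $n < N_0$, each integral $\int_{\mathbb{R}^2}(e^{4\pi t|u_n|^2}-1)\,dx$ is finite by the first (qualitative) part of Lemma \ref{Cao}, valid for any $\alpha>0$ and any $u \in H^1(\mathbb{R}^2)$; taking $C$ to be the maximum of $C(1,\gamma^*)$ and these finitely many values gives the desired uniform bound.

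The main obstacle is precisely the $\alpha$-dependence of the constant in Cao's inequality: Lemma \ref{Cao} delivers a uniform bound only for a single fixed subcritical exponent, whereas the natural exponent $\gamma_n$ varies with $n$. The hypothesis $\limsup_n|\nabla u_n|_2^2 < 1 - a^2$ is exactly what traps all the $\gamma_n$ (for large $n$) strictly below a common value $\gamma^* < 4\pi$, so that one application of Cao's inequality suffices. The remaining ingredients — the elementary inequality and the treatment of the finitely many exceptional indices — are routine.
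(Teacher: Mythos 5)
Your proof is correct. The paper itself omits the proof of this lemma (it is recalled from \cite{CCM}), and your argument --- converting the gradient hypothesis into $\limsup_n\|u_n\|^2<1$ via the mass constraint, using the elementary inequality $(e^s-1)^t\le e^{ts}-1$, and normalizing $z_n=u_n/\|u_n\|$ so that a single fixed subcritical exponent $\gamma^*<4\pi$ allows one application of Lemma \ref{Cao} --- is precisely the standard route taken in that reference, including the correct handling of the finitely many exceptional indices.
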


\begin{lemma} \label{convergencia} Let $a \in (0,1)$ and $(u_n) \subset H_{rad}^{1}(\mathbb{R}^{2})$ be a sequence  with $u_n \in S(a)$ and
	$$
	\limsup_{n \to +\infty} |\nabla u_n |_2^{2}  < 1-a^2.
	$$
	Then, there exists $\alpha>4\pi$ close to $4\pi$, such that for all $\varrho>2$,
	$$
	|u_n|^{\varrho}(e^{\alpha  |u_n(x)|^{2}}-1) \to |u|^{\varrho}(e^{\alpha |u(x)|^{2}}-1) \,\, \mbox{in} \,\, L^{1}(\mathbb{R}^N).
	$$
\end{lemma}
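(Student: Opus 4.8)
The plan is to prove Lemma \ref{convergencia} by establishing a uniform integrability (or equi-continuity) estimate that upgrades the almost-everywhere convergence of the integrands to $L^1$-convergence. First I would record the pointwise facts: since $(u_n) \subset H^1_{rad}(\mathbb{R}^2)$ is bounded (because $\limsup_n |\nabla u_n|_2^2 < 1-a^2$ and $|u_n|_2 = a$), up to a subsequence $u_n \rightharpoonup u$ weakly in $H^1$, and by the compactness of the radial embedding $H^1_{rad}(\mathbb{R}^2) \hookrightarrow L^s(\mathbb{R}^2)$ for every $s \in (2,\infty)$ together with a.e.\ convergence, we have $u_n(x) \to u(x)$ for a.e.\ $x$. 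Consequently the integrand $|u_n|^{\varrho}(e^{\alpha |u_n|^2}-1)$ converges pointwise a.e.\ to $|u|^{\varrho}(e^{\alpha|u|^2}-1)$. The goal then reduces to justifying passage to the limit under the integral.

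The key step is to choose $\alpha > 4\pi$ sufficiently close to $4\pi$ so that Lemma \ref{alphat11} applies. Because $\limsup_n |\nabla u_n|_2^2 < 1-a^2$ strictly, I can fix a number $t>1$ close to $1$ for which Lemma \ref{alphat11} yields a uniform bound $\int_{\mathbb{R}^2}(e^{4\pi|u_n|^2}-1)^t\,dx \leq C$; by a continuity/monotonicity argument in the exponent, the same uniform bound survives if $4\pi$ is replaced by some $\alpha>4\pi$ close to $4\pi$ (shrinking $t$ and enlarging $C$ as needed while keeping $\alpha t \cdot |\nabla u_n|_2^2$ controlled below the critical threshold). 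Fixing such an $\alpha$ and $t$, I would then show the family $\{|u_n|^{\varrho}(e^{\alpha|u_n|^2}-1)\}$ is uniformly integrable. The mechanism is a Hölder splitting: on any measurable set $\Omega$,
\begin{equation*}
\int_{\Omega} |u_n|^{\varrho}(e^{\alpha|u_n|^2}-1)\,dx \leq \left(\int_{\Omega}|u_n|^{\varrho t'}\,dx\right)^{1/t'}\left(\int_{\mathbb{R}^2}(e^{\alpha|u_n|^2}-1)^t\,dx\right)^{1/t},
\end{equation*}
where $t'$ is the conjugate exponent. The second factor is uniformly bounded by the Trudinger--Moser estimate above, while the first factor is controlled uniformly in $n$ because $(u_n)$ is bounded in $L^{\varrho t'}(\mathbb{R}^2)$ (a subcritical Lebesgue space, reachable by the Sobolev embedding since $\varrho t' < \infty$). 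This simultaneously gives uniform integrability over sets of small measure and, via Claim-type radial decay estimates $|u_n(x)| \leq C|x|^{-1/2}$, tightness at infinity, ruling out mass escaping to spatial infinity.

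With uniform integrability and tightness in hand, the conclusion $|u_n|^{\varrho}(e^{\alpha|u_n|^2}-1) \to |u|^{\varrho}(e^{\alpha|u|^2}-1)$ in $L^1(\mathbb{R}^2)$ follows from the Vitali convergence theorem (or equivalently a generalized dominated convergence argument), combining the a.e.\ convergence with the equi-integrability. The main obstacle I anticipate is the delicate exponent bookkeeping in the second paragraph: one must verify that after choosing $\alpha > 4\pi$ and $t>1$, the product $\alpha t$ applied to the gradient bound still stays strictly below the critical Trudinger--Moser constant $4\pi$ so that Lemma \ref{alphat11} genuinely yields a finite uniform bound. This is exactly where the strict inequality $\limsup_n |\nabla u_n|_2^2 < 1-a^2$ and the hypothesis $a \in (0,1)$ are essential: they provide the quantitative slack needed to absorb both the slight increase of $\alpha$ beyond $4\pi$ and the slight increase of the exponent $t$ beyond $1$ without crossing the critical threshold. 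The remaining estimates (boundedness in subcritical $L^p$ spaces and the Hölder split) are routine.
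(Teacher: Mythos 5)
Your proof is correct and is essentially the argument the paper relies on: the paper omits the proof of Lemma \ref{convergencia} (deferring to \cite{CCM}), but both the cited proof and the estimate the paper itself repeats in Section 4 use exactly your ingredients --- the uniform Trudinger--Moser bound of Lemma \ref{alphat11} via Cao's inequality, exploiting the slack in $\limsup_n |\nabla u_n|_2^2 < 1-a^2$ to choose $\alpha>4\pi$ and $t>1$ with the relevant exponent still below $4\pi$, then a H\"older split, the compact embedding $H^{1}_{rad}(\mathbb{R}^2)\hookrightarrow L^{s}(\mathbb{R}^2)$ for $s>2$, and Vitali's theorem. The one detail to state precisely is that the quantity to be kept below $4\pi$ in the Cao normalization is $\alpha t\,(|\nabla u_n|_2^2+a^2)$, i.e. the full squared $H^1$-norm, rather than $\alpha t\,|\nabla u_n|_2^2$ alone --- which is exactly why the hypotheses read $a\in(0,1)$ and $\limsup_n|\nabla u_n|_2^2<1-a^2$, as your closing paragraph correctly anticipates.
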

\begin{lemma} \label{Convergencia em limitados1} Let $a \in (0,1)$ and $(u_{n})$ be a sequence in $H_{rad}^{1}(\mathbb{R}^{2})$ with $u_n \in S(a)$ and
	$$
	\limsup_{n \to +\infty} |\nabla u_n |_2^{2}  < 1-a^2.
	$$
	If $u_n \rightharpoonup u$ in $H^{1}(\mathbb{R}^{2})$ and $u_n(x) \to u(x)$ a.e in $\mathbb{R}^{2}$, then
	$$
	F(u_n) \to F(u) \quad \mbox{and} \quad f(u_n)u_n \to f(u)u \,\, \mbox{in} \,\, L^{1}(\mathbb{R}^2).
	$$ 	
\end{lemma}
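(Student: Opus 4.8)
The plan is to deduce both convergences from a single mechanism, namely a generalized form of Lebesgue's dominated convergence theorem in which the dominating function is allowed to vary with $n$: if $g_n \to g$ a.e., $|g_n| \le h_n$ a.e., $h_n \to h$ a.e., and $\int_{\mathbb{R}^2} h_n \to \int_{\mathbb{R}^2} h < +\infty$, then $g_n \to g$ in $L^1(\mathbb{R}^2)$. Since $f$ is continuous and $u_n(x) \to u(x)$ a.e., we immediately have $F(u_n) \to F(u)$ and $f(u_n)u_n \to f(u)u$ a.e. in $\mathbb{R}^2$, so the only task is to produce an admissible sequence of dominating functions. First I would fix $\varrho > 2$ and pick the exponent $\alpha > 4\pi$ (close to $4\pi$) supplied by Lemma \ref{convergencia}; with this $\alpha$, the growth estimate \eqref{1.3} yields
$$|F(u_n)| \le \zeta |u_n|^{\tau+1} + C |u_n|^{\varrho}\big(e^{\alpha u_n^2}-1\big) =: h_n \quad \text{a.e. in } \mathbb{R}^2,$$
and analogously \eqref{1.4} bounds $|f(u_n)u_n|$ by the same right-hand side. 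Here $\tau > 3$ comes from $(f_1)$, so $\tau + 1 > 4 > 2$.

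The core of the argument is then to check that $h_n$ converges in $L^1$ and a.e. to the corresponding limit $h = \zeta |u|^{\tau+1} + C|u|^{\varrho}(e^{\alpha u^2}-1)$. For the polynomial term I would use the compact embedding $H^1_{rad}(\mathbb{R}^2) \hookrightarrow\hookrightarrow L^{\tau+1}(\mathbb{R}^2)$ (valid since $\tau+1 \in (2,\infty)$): the weak convergence $u_n \rightharpoonup u$ forces $u_n \to u$ strongly in $L^{\tau+1}$, whence $|u_n|^{\tau+1} \to |u|^{\tau+1}$ in $L^1(\mathbb{R}^2)$. For the exponential term, this is exactly the content of Lemma \ref{convergencia}, which gives $|u_n|^{\varrho}(e^{\alpha u_n^2}-1) \to |u|^{\varrho}(e^{\alpha u^2}-1)$ in $L^1(\mathbb{R}^2)$. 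Adding the two, $h_n \to h$ in $L^1$, hence $\int_{\mathbb{R}^2} h_n \to \int_{\mathbb{R}^2} h < +\infty$, and each summand converges a.e. because $u_n \to u$ a.e. This verifies all the hypotheses of the generalized dominated convergence theorem, and applying it to $g_n = F(u_n)$ and then to $g_n = f(u_n)u_n$ gives the two claimed $L^1$ convergences.

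The only genuinely delicate step is the $L^1$ convergence of the exponential term, and this is precisely where the standing hypothesis $\limsup_n |\nabla u_n|_2^2 < 1 - a^2$ (together with $u_n \in S(a)$ and $a \in (0,1)$) is indispensable: it is what allows Lemma \ref{convergencia} --- and behind it the uniform Trudinger--Moser bound of Lemma \ref{alphat11} and the inequality of Lemma \ref{Cao} --- to control $\int_{\mathbb{R}^2}(e^{\alpha u_n^2}-1)^t$ uniformly in $n$ for some $t>1$, which upgrades the a.e. convergence to $L^1$ convergence via equi-integrability. I would also note the compatibility of the constants: since \eqref{1.3}--\eqref{1.4} hold for every $\alpha > 4\pi$, I am free to use them with the specific $\alpha$ furnished by Lemma \ref{convergencia}, so no tension arises between the two choices of exponent. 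Everything else is routine, and the argument is identical for $F(u_n)$ and for $f(u_n)u_n$ since both obey the same majorization.
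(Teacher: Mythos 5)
Your proof is correct. The paper itself omits the proof of this lemma (it is recalled from \cite{CCM}), but your argument assembles exactly the ingredients the paper sets up for this purpose --- the growth estimates \eqref{1.3}--\eqref{1.4} taken with the exponent $\alpha>4\pi$ furnished by Lemma \ref{convergencia}, the compact embedding of $H_{rad}^{1}(\mathbb{R}^2)$ into $L^{\tau+1}(\mathbb{R}^2)$ for the subcritical term, and Lemma \ref{convergencia} itself for the exponential term --- and combines them with the generalized dominated convergence theorem, which is precisely the standard (and intended) route.
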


In what follows, we will consider the functional $J:H_{rad}^{1}(\mathbb{R}^2) \to \mathbb{R}$ given by
$$
J(u)=\frac{1}{2}\int_{\mathbb{R}^2}|\nabla u|^2 \,dx-\frac{\mu}{q}\int_{\mathbb{R}^2} |u|^q \,dx -\int_{\mathbb{R}^2}F(u)\,dx,
$$
restricts to the sphere in $L^2(\mathbb{R}^2)$, given by
$$
S(a)=\{u \in H_{rad}^{1}(\mathbb{R}^2)\,:\, | u |_2=a\, \},
$$
where $a\in (0, 1)$. By the Sobolev embedding and the Gagliardo-Nirenberg inequality (see \cite[Theorem 1.3.7, page 9]{CazenaveLivro}  ),  we have
\begin{equation}\label{Gagliardo}
	|u|^{q}_{q} \leq C|u|^{2}_{2} \vert \nabla u\vert^{\beta q}_{2} , \ \mbox{in}\ \mathbb{R}^2,\, \beta=1-\frac{2}{q},
\end{equation}
for some positive constant $C=C(\varrho, 2)>0.$ By \eqref{1.3},
\begin{equation*}
	|F(t)|\leq\zeta |t|^{\tau+1}+C|t|^{\varrho}(e^{\alpha t^{2}}-1) \text{ for all } t \in \mathbb{R}.
\end{equation*}
where $\varrho>2$ and $\alpha>4\pi$ as in the last Lemma \ref{convergencia}. Hence,
$$
|F(u)| \leq \zeta|u|^{\tau+1}+C|u|^{\varrho}(e^{\alpha |u|^{2}}-1).
$$

Using the H\"older's inequality, one has
$$
\int_{\mathbb{R}^2}|F(u)|\,dx \leq \zeta\int_{\mathbb{R}^2}|u|^{\tau+1}\,dx+C\Big(\int_{\mathbb{R}^2}|u|^{\varrho t}\,dx\Big)^{1/t}\Big(\int_{\mathbb{R}^2}(e^{\alpha |u|^{2}}-1)^{t'}dx\Big)^{1/t'},
$$
 where $t$ is the conjugate exponent of $t'$ and $t'>1$ is close to 1. Arguing as in \cite[Lemma 4.2]{CCM}, there exists $C=C(u,m)>0$ such that
 $$
 \int_{\mathbb{R}^2}(e^{\alpha |u|^{2}}-1)^{t'}dx\leq C.
 $$

 Hence,
$$
\begin{array}{l}
	J(u) \geq  \displaystyle  \frac{1}{2}\int_{\mathbb{R}^2} |\nabla u|^2 \,dx-\frac{\mu C a^{2}}{q}\left(\int_{\mathbb{R}^2}|\nabla u|^2 \,dx \right)^{\frac{q-2}{2} }\\
\mbox{} \\
\hspace{1.2 cm} 	\displaystyle -C_1\left(\int_{\mathbb{R}^2}|\nabla u|^2 \,dx \right)^{\frac{\tau-1}{2} }-C_2\left(\int_{\mathbb{R}^2}|\nabla u|^2 \,dx \right)^{(\frac{\varrho}{2}-\frac{1}{2t'}) }\\
\mbox{}\\
\hspace{1.2 cm} \geq 	h( |\nabla u|_2),
\end{array}
$$
where
$$h(r)=\frac{1}{2} r^2 -\frac{\mu C a^2}{q}r^{(q-2)}-C_1 r^{(\tau-1)}-C_2 r^{(\varrho-\frac{1}{t'})}.$$
Recalling that $2< q< 2+\frac{4}{N},$ so that, $q-2 < 2$ since $N\geq 3$, while $\tau-1>2$, $(\varrho-\frac{1}{t'}) >2$.
Then, there exists $\alpha>0$ such that if $\mu a^{2}<\alpha$, $h$ attains its positive local maximum, as in Section 3. For $0< R_0< R_1<\infty,$ let us fix
$\tau:\mathbb{R}^+ \rightarrow [0,1]$ as being a nonincreasing and $C^\infty$ function that satisfies
$$
\tau(x)=\left\{ \begin{array}{rcr} 1 &\mbox{if}& x\leq R_0,\\ 0 &\mbox{if}& x \geq R_1.\end{array}\right.
$$
In the sequel, we consider the truncated functional
$$
J_T(u)=\frac{1}{2}\int_{\mathbb{R}^2}|\nabla u|^2 \,dx-\frac{\mu}{q}\int_{\mathbb{R}^2} |u|^q \,dx -\tau(\vert \nabla u\vert_2)\int_{\mathbb{R}^2}F(u)\,dx,
$$
Thus,
$$
\begin{array}{l}
	J_T(u) \geq  \displaystyle  \frac{1}{2}\int_{\mathbb{R}^2} |\nabla u|^2 \,dx-\frac{\mu C a^{2}}{q}\left(\int_{\mathbb{R}^2}|\nabla u|^2 \,dx \right)^{\frac{q-2}{2} }\\
\mbox{} \\
\hspace{1.2 cm} 	\displaystyle -\tau(\vert \nabla u\vert_2)\Big(C_1\left(\int_{\mathbb{R}^2}|\nabla u|^2 \,dx \right)^{\frac{\tau-1}{2} }+C_2\left(\int_{\mathbb{R}^2}|\nabla u|^2 \,dx \right)^{(\frac{\varrho}{2}-\frac{1}{2t'}) }\Big)\\
\mbox{}\\
\hspace{1.2 cm} =\overline{h}(u),
\end{array}
$$
where
$$
\overline{h}(r)=\frac{1}{2} r^2 -\frac{\mu C_qa^{2}}{q}r^{(q-2)}-(C_1 r^{(\tau-1)}+C_2 r^{(\varrho-\frac{1}{t'})})\tau(r).
$$
Without loss of generality, hereafter we will assume that
\begin{equation} \label{R0}
	\frac{1}{2} r^2 -(C_1 r^{(\tau-1)}+C_2 r^{(\varrho-\frac{1}{t'})}) \geq 0, \quad  \forall r \in [0,R_0] \quad \mbox{and} \quad R_0 < \sqrt{1-a^2}.
\end{equation}

\begin{lemma}\label{Lemma 4.2N2}
	\noindent (i) $J_T \in C^1(H_{rad}^{1}(\mathbb{R}^2), \mathbb{R}).$\\
	\noindent (ii) If $J_T(u) \leq 0$ then $\vert\nabla u\vert_ 2 < R_0 ,$ and $J(v)=J_T(v),$ for all $ v$ in a small neighborhood  of $u$ in $H^{1}(\mathbb{R}^2)$.\\
	\noindent (iii) There exists $\mu_1 >0$  such that, if $0<\mu <\mu_1,$ then $J_T$ verifies a local Palais-Smale condition on $S(a)$ with $a\in (0, 1)$  for the level $c < 0$.
\end{lemma}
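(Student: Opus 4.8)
The plan is to mirror the proof of Lemma \ref{Lemma 4.2}, replacing the concentration--compactness argument that controlled the Sobolev-critical term for $N\geq 3$ by the Trudinger--Moser compactness results in Lemmas \ref{convergencia} and \ref{Convergencia em limitados1}. Parts (i) and (ii) follow at once from the construction: $J_T$ is $C^1$ because $u\mapsto\int_{\mathbb{R}^2}F(u)\,dx$ is $C^1$ on $H^1_{rad}(\mathbb{R}^2)$ (via Lemma \ref{Cao} and \eqref{1.2}) and $u\mapsto\tau(|\nabla u|_2)$ is smooth; and the lower bound $J_T(u)\geq\overline{h}(|\nabla u|_2)$ together with the sign condition \eqref{R0} forces $|\nabla u|_2<R_0$ whenever $J_T(u)\leq0$, on which region $\tau\equiv1$ and hence $J_T\equiv J$ in a neighborhood of $u$.

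For (iii), I would take a $(PS)_c$ sequence $(u_n)\subset S(a)$ for $J_T|_{S(a)}$ with $c<0$. By (ii) we have $|\nabla u_n|_2<R_0$ for all large $n$, so $(u_n)$ is simultaneously a $(PS)_c$ sequence for $J|_{S(a)}$ and is bounded in $H^1_{rad}(\mathbb{R}^2)$; thus $u_n\rightharpoonup u$, and the compact radial embedding gives $\int_{\mathbb{R}^2}|u_n|^q\,dx\to\int_{\mathbb{R}^2}|u|^q\,dx$. The Lagrange multiplier rule (Willem \cite[Proposition 5.12]{Willem}) then yields $(\lambda_n)\subset\mathbb{R}$ with
$$-\Delta u_n-\mu|u_n|^{q-2}u_n-f(u_n)=\lambda_n u_n+o_n(1)\quad\text{in }(H^1_{rad}(\mathbb{R}^2))^*,$$
and boundedness of $(u_n)$ forces $\lambda_n\to\lambda_a$ along a subsequence, so that $u$ solves $-\Delta u=\lambda_a u+\mu|u|^{q-2}u+f(u)$.

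The decisive point is that $|\nabla u_n|_2<R_0<\sqrt{1-a^2}$ by \eqref{R0}, so $\limsup_n|\nabla u_n|_2^2<1-a^2$, which is exactly the hypothesis of Lemmas \ref{convergencia} and \ref{Convergencia em limitados1}; these supply $\int_{\mathbb{R}^2}F(u_n)\,dx\to\int_{\mathbb{R}^2}F(u)\,dx$ and $\int_{\mathbb{R}^2}f(u_n)u_n\,dx\to\int_{\mathbb{R}^2}f(u)u\,dx$. I would first rule out $u=0$: in that case $\int_{\mathbb{R}^2}|u_n|^q\,dx\to0$, while \eqref{R0} makes the gradient term dominate the $F$-contribution so that $J(u_n)\geq-\tfrac{\mu}{q}\int_{\mathbb{R}^2}|u_n|^q\,dx$, forcing $c\geq0$, a contradiction. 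Next, the Pohozaev identity in $\mathbb{R}^2$ collapses to $\int_{\mathbb{R}^2}G(u)\,dx=0$ with $G(t)=\tfrac{\lambda_a}{2}t^2+\tfrac{\mu}{q}|t|^q+F(t)$; since $u\neq0$ gives $\int_{\mathbb{R}^2}|u|^q\,dx>0$ and $(f_3)$ with the oddness of $f$ gives $F\geq0$, this forces $\lambda_a<0$. Finally, testing the two equations against $u_n$ and $u$ and letting $n\to\infty$ with the convergences above yields $|\nabla u_n|_2^2-\lambda_a|u_n|_2^2\to|\nabla u|_2^2-\lambda_a|u|_2^2$; as $\lambda_a<0$ makes $(|\nabla\,\cdot\,|_2^2-\lambda_a|\cdot|_2^2)^{1/2}$ an equivalent norm on $H^1_{rad}(\mathbb{R}^2)$, weak convergence together with convergence of these norms gives $u_n\to u$ strongly, whence $|u|_2=a$.

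The hard part will be the loss of compactness in the exponential terms $\int_{\mathbb{R}^2}F(u_n)\,dx$ and $\int_{\mathbb{R}^2}f(u_n)u_n\,dx$, which do not converge in general. This is precisely what the truncation is designed to handle: fixing $R_0<\sqrt{1-a^2}$ in \eqref{R0} traps every low-energy $(PS)$ sequence inside the region $|\nabla u_n|_2^2<1-a^2$, where the subcritical Trudinger--Moser bound of Lemma \ref{Cao} holds uniformly and Lemmas \ref{convergencia}--\ref{Convergencia em limitados1} restore compactness. The second delicate ingredient is the strict sign $\lambda_a<0$, used in the final equivalent-norm argument, which relies on $F\geq0$ furnished by $(f_3)$.
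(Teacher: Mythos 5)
Your proposal is correct and follows essentially the same route as the paper's proof: trap the $(PS)_c$ sequence in the region $|\nabla u_n|_2 < R_0 < \sqrt{1-a^2}$ via the truncation, invoke the Trudinger--Moser compactness lemmas (Lemmas \ref{convergencia} and \ref{Convergencia em limitados1}), rule out $u=0$ using the vanishing of the $L^q$-term, obtain $\lambda_a<0$ from the Pohozaev identity, and conclude strong convergence by the equivalent-norm argument. The only cosmetic differences are that you bound $\int_{\mathbb{R}^2}F(u_n)\,dx$ through the sign condition \eqref{R0} when excluding $u=0$ (the paper instead applies Lemma \ref{Convergencia em limitados1} with $u=0$ to get $\int_{\mathbb{R}^2}F(u_n)\,dx\to 0$), and that you spell out the Pohozaev computation ($\int_{\mathbb{R}^2}G(u)\,dx=0$ with $F\geq 0$ from $(f_3)$) which the paper leaves implicit.
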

\begin{proof}
	\noindent {\bf (i)} and {\bf (ii)} are trivial. \\
	\noindent {Proof of (iii):} Let $(u_n)$ be a $(PS)_c$ sequence of $J_T$ restricts to $S(a)$ with $c < 0$. Then by the definition of $J_T$, we must have $|\nabla u_n|_2 < R_0$, and so, $(u_n)$ is also a $(PS)_c$ sequence of $J$ restricts to $S(a)$ with $c < 0$. Hence,
	\begin{equation} \label{gamma(a)}
		J(u_n) \to c \quad \mbox{as} \quad n \to +\infty,
	\end{equation}
	and
	\begin{equation*} \label{der1}
		\|J|'_{S(a)}(u_n)\| \to 0 \quad \mbox{as} \quad n \to +\infty.
	\end{equation*}
	Setting the functional $\Psi:H^{1}(\mathbb{R}^2) \to \mathbb{R}$ given by
	$$
	\Psi(u)=\frac{1}{2}\int_{\mathbb{R}^2}|u|^2\,dx,
	$$
	it follows that $S(a)=\Psi^{-1}(\{a^2/2\})$. Then, by Willem \cite[Proposition 5.12]{Willem}, there exists $(\lambda_n) \subset \mathbb{R}$ such that
	$$
	||J'(u_n)-\lambda_n\Psi'(u_n)||_{H^{-1}} \to 0 \quad \mbox{as} \quad n \to +\infty.
	$$
	Hence,
\begin{equation} \label{EQ10N2}
		-\Delta u_n-\mu|u_n|^{q-2}u_n-f(u_n)u_n=\lambda_nu_n\ + o_n(1) \quad \mbox{in} \quad (H_{rad}^{1}(\mathbb{R}^2))^*.
	\end{equation}
	
	Since $J_T$ is coercive on $S(a)$, we have that $(|\nabla u_n|_2)$ is bounded, from where it follows that $(u_n)$ is a bounded sequence in $H_{rad}^{1}(\mathbb{R}^2)$. Therefore, for some subsequence, there is $u \in H_{rad}^{1}(\mathbb{R}^2)$ such that
	$$
	u_n \rightharpoonup u \quad \mbox{in} \quad H^{1}(\mathbb{R}^2)
	$$
	and
	\begin{equation} \label{convq}
		\lim_{n \to +\infty}\int_{\mathbb{R}^2}|u_n|^q\,dx=\int_{\mathbb{R}^2}|u|^q\,dx,
	\end{equation}
	because $q \in (2,2+\frac{4}{N})$.
	\begin{claim} The weak limit $u$ is nontrivial, that is, $u \not= 0$.
	\end{claim}
	If we assume that $u=0$, we would have
	\begin{equation} \label{q}
		\lim_{n \to +\infty}\int_{\mathbb{R}^2}|u_n|^q\,dx=0.
	\end{equation}
Moreover, since $|\nabla u_n|<R_0$ and $R_0 < \sqrt{1-a^2}$, we can apply Lemma \ref{Convergencia em limitados1} to get
\begin{equation} \label{F}
	\lim_{n \to +\infty}\int_{\mathbb{R}^2}F(u_n)\,dx=0
\end{equation}
	From the definition of $J_T$, we know that
	$$
	J(u_u)=J_T(u_n)= \frac{1}{2}\int_{\mathbb{R}^2} |\nabla u_n|^2 \,dx-\frac{\mu}{q}\int_{\mathbb{R}^2}|u_n|^q \,dx -\int_{\mathbb{R}^2}F(u_n)\,dx.
	$$
	Then,
	$$
c=\lim_{n \to +\infty}J(u_n) \geq -\frac{\mu}{q}\int_{\mathbb{R}^2}|u_n|^q \,dx-\int_{\mathbb{R}^2}F(u_n)\,dx, \quad \forall n \in \mathbb{N}.
	$$
Taking the limit as $n \to +\infty$ in the last inequality and using (\ref{q})-(\ref{F}), we get
	$$
	c=\lim_{n \to +\infty}J(u_n) \geq 0
	$$
	which is absurd  since $c<0$. 
	
On the other hand, using the fact that $(u_n)$ is bounded in $H_{rad}^{1}(\mathbb{R}^2)$, it follows from (\ref{EQ10N2}) and $(u_n)\subset S(a)$ that $(\lambda_n)$ is also a bounded sequence, then we can assume that for some subsequence $\lambda_n \to \lambda_a \in \mathbb{R}$. This limit together with (\ref{EQ10N2}) ensures that $u$ is a nontrivial solution of the equation
\begin{equation} \label{EQ2N2}
	-\Delta u -\mu|u|^{q-2}u-f(u)=\lambda_a u, \quad \mbox{in} \quad \mathbb{R}^2.
\end{equation}
By the Pohozaev's identity, we must have $\lambda_a<0$, because $u$ is nontrivial.
	
On the other hand,
$$
|\nabla u_n|_{2}^{2}-\lambda_n|u_n|_{2}^{2}=\mu\int_{\mathbb{R}^2}|u_n|^{q}\,dx+\int_{\mathbb{R}^2}f(u_n)u_n\,dx+o_n(1),
$$
or yet,
$$
|\nabla u_n|_{2}^{2}-\lambda_a|u_n|_{2}^{2}=\mu\int_{\mathbb{R}^2}|u_n|^{q}\,dx+\int_{\mathbb{R}^2}f(u_n)u_n\,dx+o_n(1).
$$
Recalling that by Lemma \ref{Convergencia em limitados1}
$$
\lim_{n \to +\infty}\int_{\mathbb{R}^2}f(u_n)u_n\,dx=\int_{\mathbb{R}^2}f(u)u\,dx,
$$
we derive that
$$
|\nabla u_n|_{2}^{2}-\lambda_a|u_n|_{2}^{2}=\mu\int_{\mathbb{R}^2}|u|^{q}\,dx+\int_{\mathbb{R}^2}f(u)u\,dx+o_n(1).
$$
By (\ref{EQ2N2}), we know that
$$
|\nabla u|_{2}^{2}-\lambda_a|u|_{2}^{2}=\mu\int_{\mathbb{R}^2}|u|^{q}\,dx+\int_{\mathbb{R}^2}f(u)u\,dx,
$$
consequently
$$
\lim_{n \to +\infty}(|\nabla u_n|_{2}^{2}-\lambda_a|u_n|_{2}^{2})=|\nabla u|_{2}^{2}-\lambda_a|u|_{2}^{2}.
$$
As $\lambda_a<0$, the last equality implies that
$$
u_n \to u \quad \mbox{in} \quad H_{rad}^{1}(\mathbb{R}^2),
$$
implying that $|u|_{2}=a$. This establishes the desired result.

\end{proof}

Now, for $\epsilon >0$, we define the set
$$
A=\{ u \in H_{rad}^{1}(\mathbb{R}^2)\cap S(a): J_T(u)\leq-\epsilon\}\subset H_{rad}^{1}(\mathbb{R}^2),
$$
which is a closed symmetric subset of $S(a)$, because $J_T$ is even and continuous.

\begin{lemma}\label{Lemma 4.3}
	Given $n \in \mathbb{N},$ there are $ \epsilon =\epsilon(n)>0$ and $\mu=\mu(n)>0$  such that $\mathcal{G}(A)\geq n,$ for any $ 0<\epsilon \leq\epsilon(n)$ and  $\mu \geq \mu_n$.
\end{lemma}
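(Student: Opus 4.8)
The plan is to follow verbatim the scheme of Lemma \ref{Lemma 4.33}, the only new ingredient being that the explicit critical power must be replaced by the lower bound on $F$ furnished by $(f_3)$. First I would fix, for the given $n$, an $n$-dimensional subspace $E_n\subset H^1_{rad}(\mathbb{R}^2)$ equipped with a basis $\{u_1,\dots,u_n\}$ that is orthogonal both in $D^{1,2}(\mathbb{R}^2)$ and in $L^2(\mathbb{R}^2)$, normalised so that $|\nabla u_j|_2=\rho$ and $|u_j|_2=a$ for every $j$. Setting
$$
\Upsilon_n=\Big\{\textstyle\sum_{j=1}^n s_ju_j:\ \sum_{j=1}^n s_j^2=1\Big\},
$$
the orthogonality relations give $|v|_2=a$ and $|\nabla v|_2=\rho$ for every $v\in\Upsilon_n$, so $\Upsilon_n\subset S(a)$, and $\Upsilon_n$ is homeomorphic, through an odd homeomorphism, to a Euclidean sphere of $\mathbb{R}^n$; hence $\mathcal{G}(\Upsilon_n)=n$ by the standard properties of the genus.

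Next, choosing $\rho\in(0,R_0)$ guarantees $|\nabla v|_2=\rho<R_0$, so the truncation is inactive and $J_T(v)=J(v)$ for all $v\in\Upsilon_n$. The key step is to estimate $\int_{\mathbb{R}^2}F(v)\,dx$ from below. Because $f$ is odd, $F$ is even, and integrating the inequality $\mathrm{sgn}(t)\,f(t)\ge\mu|t|^{p-1}$ of $(f_3)$ yields
$$
F(t)\ \ge\ \frac{\mu}{p}|t|^p,\qquad \forall\, t\in\mathbb{R}.
$$
Writing $v=\rho\,w$ with $|\nabla w|_2=1$ and $w\in S(a/\rho)\cap E_n$, and introducing the quantities
$$
\beta_n=\inf\Big\{\!\int_{\mathbb{R}^2}|w|^q\,dx:\ w\in S(a/\rho)\cap E_n,\ |\nabla w|_2=1\Big\},\quad
\gamma_n=\inf\Big\{\!\int_{\mathbb{R}^2}|w|^p\,dx:\ w\in S(a/\rho)\cap E_n,\ |\nabla w|_2=1\Big\},
$$
which are both strictly positive since $E_n$ is finite-dimensional and all norms on it are equivalent, I would obtain
$$
J_T(v)\ \le\ \frac12\rho^2-\frac{\mu}{q}\rho^q\beta_n-\frac{\mu}{p}\rho^p\gamma_n,\qquad \forall\, v\in\Upsilon_n.
$$

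Finally I would freeze $\rho\in(0,R_0)$ and let $\mu$ grow: since $q>2$ and $p>4$, the right-hand side is the fixed positive number $\tfrac12\rho^2$ minus a quantity proportional to $\mu$, so it tends to $-\infty$ as $\mu\to+\infty$. Hence there exist $\mu_n=\mu(n)>0$ and $\epsilon(n)>0$ such that for every $\mu\ge\mu_n$ one has $J_T(v)\le-\epsilon$ on $\Upsilon_n$ for all $0<\epsilon\le\epsilon(n)$, i.e. $\Upsilon_n\subset A$. Monotonicity of the genus then gives $\mathcal{G}(A)\ge\mathcal{G}(\Upsilon_n)=n$, as claimed. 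The only point requiring care is keeping $\rho<R_0<\sqrt{1-a^2}$, so that the truncation stays inactive and, via \eqref{R0}, the estimates remain valid; since this merely fixes $\rho$ once and for all and the largeness of $\mu$ is then free, there is no genuine obstacle. The exponential growth of $f$ plays no role here, because on $\Upsilon_n$ the gradient norm is capped by $R_0$, and the whole estimate rests only on the polynomial minorant provided by $(f_3)$.
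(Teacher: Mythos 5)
Your proposal is correct and follows essentially the same route as the paper's proof: the same construction of the sphere $\Upsilon_n$ inside an $n$-dimensional subspace $E_n$ with a doubly orthogonal basis, the same reduction to $J$ via $\rho<R_0$, the same use of $(f_3)$ to minorize $F$ by a power, the same infima $\beta_n,\gamma_n$ (the paper's $\beta_n,\alpha_n$), and the same genus-monotonicity conclusion. If anything, your write-up is slightly more careful than the paper's, since you keep the factor $\mu$ in the bound $F(t)\geq \frac{\mu}{p}|t|^p$ and make explicit both why the truncation is inactive on $\Upsilon_n$ and why letting $\mu\to+\infty$ with $\rho$ frozen forces $J_T\leq-\epsilon$ there.
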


\begin{proof} Arguing as \cite{GP}, for each $n  \in \mathbb{N},$ let us consider an $n$ -dimensional $E_n \subset H_{rad}^{1}(\mathbb{R}^2) $ that has an orthogonal basis of the form
	$$
	\mathcal{B}=\{u_1,u_2,...,u_m\}
	$$
	such that $\displaystyle \int_{\mathbb{R}^2}\nabla u_j \nabla u_k\,dx=\int_{\mathbb{R}^2}u_ju_k\,dx=0$ for $j \not= k$ and $|\nabla u_j|_2 =\rho<\sqrt{1-a^2}$ and $|u_j|_2=a$, and so, $\|u_j\|=\sqrt{\rho^2 + a^2}$ for $j=1, 2, \cdots, n$. By $(f_3)$,
$$
J_T(v) \leq \frac{1}{2}\int_{\mathbb{R}^2} |\nabla v|^2 \,dx-\frac{\mu}{q}\int_{\mathbb{R}^2}|v|^q \,dx -\frac{1}{p}\int_{\mathbb{R}^2}|v|^p \,dx, \quad \forall v \in H^{1}(\mathbb{R}^2).
$$
Arguing as in the proof of Lemma \ref{Lemma 4.33}, we get
$$
J_T(v) \leq \frac{1}{2}\rho^2-\frac{\mu\beta_n}{q}\rho^{q} -C_1\alpha_n \rho^{p}, \quad \forall v \in \Upsilon_n,
$$
where
$$
\alpha_n=\inf\left\{ \int_{\mathbb{R}^2} |u|^{p}\, dx: u\in S(a/\rho) \cap E_n, |\nabla u|_2=1\right\} >0
$$
and	
$$
\beta_n=\inf\left\{ \int_{\mathbb{R}^2} |u|^{q}\, dx: u\in S(a/\rho) \cap E_n, |\nabla u|_2=1\right\} >0.
$$
From this, we can choose $\mu=\mu(n)>0$, $\epsilon=\epsilon(n)>0$ and $\rho< R_0$ such that
$$
J_T(v)\leq -\epsilon, \quad \forall v \in \Upsilon_n.
$$
Hence, $\Upsilon_n \subset A$ and $\mathcal{G} (A) \geq\mathcal{G}(\Upsilon_n)=n$.
	
\end{proof}

\subsection{Proof Theorem \ref{T2}}

The proof of Theorem \ref{T2} follows from Proposition \ref{Lemma 4.4}, because the critical points of $J_T$ were found in that proposition are in fact critical points of $J$, see Lemma \ref{Lemma 4.2N2}-(ii).

\noindent \textsc{Claudianor O. Alves } \\
Unidade Acad\^{e}mica de Matem\'atica\\
Universidade Federal de Campina Grande \\
Campina Grande, PB, CEP:58429-900, Brazil \\
\texttt{coalves@mat.ufcg.edu.br} \\
\noindent and \\
\noindent \textsc{Chao Ji} \\
Department of Mathematics\\
East China University of Science and Technology \\
Shanghai 200237, PR China \\
\texttt{jichao@ecust.edu.cn}\\
\noindent and \\
\noindent \textsc{Ol\'{i}mpio H. Miyagaki} \\
Departamento de Matem\'{a}tica \\
Universidade Federal de S\~ao Carlos \\
S\~ao Carlos,  SP, CEP:13565-905,  Brazil\\
\texttt{olimpio@ufscar.br}

\end{document}